\title{On the number of matroids}
\author{N. Bansal}
\address{Eindhoven University of Technology, Eindhoven, the Netherlands}
\thanks{This research has been supported
by  the Netherlands Organisation for Scientific Research (NWO) grant 639.022.211.}
\email{bansal@gmail.com}
\author{R. A. Pendavingh}
\address{Eindhoven University of Technology, Eindhoven, the Netherlands}
\email{R.A.Pendavingh@tue.nl}
\author{J. G. van der Pol}
\address{Eindhoven University of Technology, Eindhoven, the Netherlands}
\email{j.g.v.d.pol@tue.nl}
\date{}
\begin{document}
\newcommand{\CC}{\mathcal{C}}
\newcommand{\BB}{\mathcal{B}}
\newcommand{\II}{\mathcal{I}}
\newcommand{\FF}{\mathcal{F}}
\newcommand{\MM}{\mathbb{M}}
\newcommand{\ZZ}{\mathcal{Z}}
\newcommand{\R}{\mathbb{R}}
\newcommand{\N}{\mathbb{N}}
\newcommand{\cl}{\mbox{cl}}
\newcommand{\e}{\text{e}}
\newcommand{\ind}[1]{\II\left(#1\right)}
\newtheorem{theorem}{Theorem}
\newtheorem{lemma}[theorem]{Lemma}
\newtheorem{corollary}[theorem]{Corollary}
\newtheorem{conjecture}[theorem]{Conjecture}
\newtheorem{definition}[theorem]{Definition}
\newtheorem*{remark*}{Remark}
\newcommand{\ignore}[1]{}
\newcommand{\defeq}{:=}		
\newcommand{\eqdef}{:=}		
\newcommand{\eps}{\varepsilon}
\makeatletter
\newcommand{\statetheorem}[2]{\expandafter\def\csname thmcontent@#1\endcsname {#2}\begin{theorem}\label{#1}#2\end{theorem}\newtheorem*{thm:#1}{Theorem \ref{#1}}}
\newcommand{\repeattheorem}[1]{\begin{thm:#1}\csname thmcontent@#1\endcsname \end{thm:#1}}
\newcommand{\statecorollary}[2]{\expandafter\def\csname crlcontent@#1\endcsname {#2}\begin{corollary}\label{#1}#2\end{corollary}\newtheorem*{crl:#1}{Corollary \ref{#1}}}
\newcommand{\repeatcorollary}[1]{\begin{crl:#1}\csname crlcontent@#1\endcsname \end{crl:#1}}
\makeatother

\setlength{\parskip}{0.2ex}

\renewcommand{\algorithmcfname}{Figure}

\begin{abstract}
We consider the problem of determining $m_n$, the number of matroids on $n$ elements.
The best known lower bound on $m_n$ is due to Knuth (1974) who showed that $\log \log m_n$ is at least $n-\frac{3}{2}\log n-O(1)$. On the other hand, Piff (1973) showed that  $\log\log m_n\leq n-\log n+\log\log n +O(1)$, and it has been conjectured since that the right answer is perhaps closer to Knuth's bound.

We show that this is indeed the case, and prove an upper bound on  $\log\log m_n$ that is within an additive $1+o(1)$ term of Knuth's lower bound.
Our proof is based on using some structural properties of non-bases in a matroid together with some properties of stable sets in the Johnson graph to give a compressed representation of matroids.
\end{abstract}

\maketitle

\section{Introduction}
Matroids, introduced
by Whitney in his seminal paper \cite{Whitney1935}, are fundamental combinatorial objects
and have been extensively studied due to their very close connection to combinatorial optimization, see e.g.\ ~\cite{SchrijverBookB}, and their ability to abstract core notions from areas such as graph theory and linear algebra \cite{Kung1996, OxleyBook}.

There are several ways to define a matroid. Perhaps the most natural one is using the notion of independence.
A matroid $M$ is a pair $(E,\II)$, where $E$ is the ground set of elements, and $\II$ is a nonempty collection of subsets of $E$ called the independent sets with the following properties:
\begin{enumerate}
\item Subset property: $A \in \II$ implies $A'\in \II$ for all $A'\subset A$, and
\item Exchange property: If $A,B \in \II$ with  $|A|>|B|$, then there exists an element $x$ in $A\setminus B$, such that $B \cup \{x\} \in \II$.
\end{enumerate}

A basic question is: how many distinct matroids are there on a ground set of $n$ elements?
We denote this number by $m_n$.
Clearly, there are $2^n$ subsets of $E$ and hence at most $2^{2^n}$ ways to choose $\II$, which gives the trivial upper bound $\log \log m_n \leq n$. Here, and throughout the paper, $\log$ denotes the logarithm to the base 2.

This bound is easily improved to $\log \log m_n \leq n - \frac{1}{2}\log n + O(1)$ by focussing on matroids of a fixed {\em rank}. In a matroid, the maximal independent sets are called {\em bases}, and by the exchange property all bases of a matroid have the same cardinality. This common cardinality is the rank of the matroid. Let $m_{n,r}$ be the number of matroids of rank $r$.
As $m_n= m_{n,0}+m_{n,1}+\ldots + m_{n,n}$, it must hold that  $m_{n,r} \geq m_n/(n+1)$ for some $r$.
By the subset property, any matroid of rank $r$ is completely determined by specifying its bases. As there are at most $\binom{n}{r} \leq \binom{n}{\lfloor n/2 \rfloor} =O(2^n/\sqrt{n})$ (call this $\ell$) such bases, this gives $m_{n,r} \leq  2^\ell$ and thus
$$\log \log m_n \leq  \log \log ((n+1) m_{n,r}) \leq \log \log ((n+1) 2^\ell)  = n - \frac{1}{2} \log n + O(1).$$

In 1973, Piff \cite{Piff1973} improved this bound further to  $\log\log m_n\leq n-\log n+\log\log n +O(1)$, by observing that a matroid is also completely determined by the closures of its circuits, and using a counting argument to show that there ``only" $O(2^n/n)$ such closures
(we describe Piff's proof in Section \ref{ss:prelim_piff}). This is the best upper bound known to date.

In the other direction, the best known lower bound is due to Knuth \cite{Knuth1974} from 1974, who showed that
$\log\log m_n \geq n - \frac{3}{2}\log n - O(1)$.
Knuth's bound is based on an elegant construction of  matroids whose {\em non-bases}\protect\footnote{For a matroid of rank $r$, a non-basis is an $r$-subset of the ground set that is dependent.}
satisfy a particular property.
Specifically, he constructs a large family of so-called {\em sparse paving matroids}. These are matroids of rank $r$, where any two non-bases 
intersect in at most $r-2$ elements (i.e.\ ~their incidence vectors have Hamming distance $4$ or more). Such sets of non-bases are precisely the stable sets\protect\footnote{We use {\em stable set} as a synonym for what is often called {\em independent set} in graph theory (i.e.\ ~a set of vertices, no two of which are adjacent). As independent set has a different meaning in matroid theory, this serves to avoid confusion.} in the so-called {\em Johnson graph} $J(n,r)$. This is the graph with vertex set 
$\{X\subseteq[n]\mid |X|=r\}$, in which two vertices are adjacent if and only if their intersection contains $r-1$ elements.

Knuth's bound follows by taking a collection of $k=\frac{1}{n} \binom{n}{\lfloor n/2 \rfloor}$ such non-bases ---~equivalently, a stable set in the graph $J(n,n/2)$ of this size, of which an explicit description is provided in Section \ref{ss:prelim_knuth}~--- 
and considering the family of size $2^k$ of sparse paving matroids obtained by taking each possible subset of this family. Thus $m_n \geq s_n \geq 2^k$, where $s_n$ is the number of sparse paving matroids on $n$ elements. This gives the lower bound
\begin{equation}
\label{knuth:lb}
\log \log m_n \geq \log \log s_n \geq  \log k =  n - \frac{3}{2}\log n  + \frac{1}{2} \log \frac{2}{\pi} - o(1).
\end{equation}
We explain Knuth's bound in more detail in Section \ref{ss:prelim_knuth}.

Historically, the interest in paving matroids seems to be a response to the publication of the catalog of matroids on at most 8 elements by Blackburn, Crapo, and Higgs \cite{BlackburnCrapoHiggs1973} in the early 1970's. With reference to such numerical evidence, Crapo and Rota consider it probable that paving matroids ``would actually predominate in any asymptotic enumeration of geometries'' \cite[p.~3.17]{CrapoRotaBook}.
In his book ``Matroid Theory'', Welsh also notes that paving matroids predominate among the small matroids, and puts the question whether this pattern extends to matroids in general as an exercise \cite[p.~41]{WelshBook}.  An earlier lower bound on the number of matroids due to Piff and Welsh \cite{PiffWelsh1971} was also based on a bound on the number of (sparse) paving matroids. Mayhew and Royle recently confirmed that the predominance of sparse paving matroids extends to the matroids on 9 elements \cite{MayhewRoyle2008}.

In recent years, (sparse) paving matroids have received attention in relation to a wide variety of matroid topics \cite{Jerrum2006, GeelenHumphries2006, MerinoNoble2010, Bonin2011}. These authors all suggest that  the class of sparse paving matroids is probably a very substantial subset of all matroids, pointing out Knuth's argument for the lower bound.

Mayhew, Newman, Welsh and Whittle \cite{MayhewNewmanWelshWhittle2011} present a very nice collection of conjectures on the asymptotic behavior of matroids.  In particular, they conjecture that asymptotically almost every matroid is sparse paving:
\begin{conjecture}[Mayhew, Newman, Welsh and Whittle \cite{MayhewNewmanWelshWhittle2011}]\label{MNWW}
	$\lim_{n \rightarrow \infty} s_n/m_n =1$.
\end{conjecture}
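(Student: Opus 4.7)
The plan is to establish $m_n - s_n = o(s_n)$ by classifying matroids according to how badly they deviate from being sparse paving. Since a matroid of rank $r$ is sparse paving exactly when its set of non-bases is a stable set in the Johnson graph $J(n,r)$, I would measure the ``defect'' of a matroid $M$ by the number of Johnson edges ---~pairs of non-bases at Hamming distance~$2$~--- contained in its non-basis set. Writing $m_{n,r}^{(t)}$ for the count of rank-$r$ matroids with exactly $t$ Johnson edges, sparse paving matroids correspond precisely to $t=0$, and the conjecture reduces to $\sum_{r,\,t\geq 1} m_{n,r}^{(t)} = o(s_n)$.

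Two reductions appear natural. First, restrict attention to ranks $r$ near $n/2$: the upper bound on $m_n$ established elsewhere in this paper shows that other ranks contribute only a polynomial factor, which is swallowed by the doubly exponential $s_n$. Second, I would attempt a compression argument that maps each matroid $M$ of positive defect to a sparse paving matroid $M'$ together with a short certificate recording the offending non-bases. Concretely, given a Johnson edge $\{A,B\}$ with $|A\cap B|=r-1$ among the non-bases of $M$, remove one of them to obtain a candidate matroid with strictly smaller defect; iterating terminates at a sparse paving matroid. If on average each defect edge can be encoded in fewer than $(\log s_n)/k$ bits, with $k=\binom{n}{\lfloor n/2\rfloor}/n$ as in Knuth's bound, then summing over defect levels $t \geq 1$ delivers the required estimate.

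The main obstacle, and the reason this conjecture is genuinely hard, is that the techniques of this paper control $\log m_n$ only up to additive lower-order terms. Passing from a ratio-of-logs statement to the multiplicative statement $m_n/s_n = 1 + o(1)$ demands a much finer structural understanding of which configurations of intersecting non-bases are consistent with the matroid axioms. In particular, two non-bases $A,B$ with $|A \cap B|=r-1$ force, via circuit elimination applied to the unique circuit contained in $A \cup B$, a propagating family of additional non-bases; exploiting this propagation to show that each defect edge costs enough constraints to make defective matroids asymptotically negligible is where the compression framework developed here seems to fall short. Closing that gap will likely require either new structural theorems about circuits in small-rank-difference configurations or a cleverer compression that encodes defects jointly rather than one at a time.
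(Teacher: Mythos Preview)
This statement is a \emph{conjecture}, not a theorem, and the paper does not prove it. It is cited from \cite{MayhewNewmanWelshWhittle2011} and remains open; the paper's own results stop at Corollary~\ref{relative_bound}, namely $\log\log m_n \le \log\log s_n + 1 + o(1)$, which is far weaker than $s_n/m_n \to 1$. So there is no ``paper's own proof'' to compare your proposal against.

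Your write-up is accordingly not a proof either, and you yourself say as much in the final paragraph. The compression outline you sketch has a genuine gap beyond the one you flag: removing a non-basis from a matroid need not yield a matroid. If $A$ and $B$ are adjacent non-bases and you declare $A$ a basis, the resulting $\BB' = \BB \cup \{A\}$ may violate the basis exchange axiom (this is exactly why circuit--hyperplane relaxation is special). So your iterative ``remove one offending non-basis'' map does not land in $\MM_{n,r}$, let alone in the sparse paving matroids, and the certificate you record is not decodable back to $M$. Any compression of this type would have to work at the level of the full dependency structure (circuits, flats) rather than one non-basis at a time, which is precisely the difficulty the paper leaves open.
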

If true, this would imply:
\begin{conjecture}\label{main:conj}
$\log \log m_n = \log \log s_n + o(1)$.
\end{conjecture}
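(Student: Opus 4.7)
The nontrivial direction is $\log \log m_n \leq \log \log s_n + o(1)$; the reverse is immediate from $s_n \leq m_n$. Combined with the estimate $\log s_n = \Theta(\binom{n}{\lfloor n/2\rfloor}/n) = \Theta(2^n/n^{3/2})$ obtained from Knuth's construction together with Piff's upper bound applied to sparse paving matroids, the task reduces to proving $\log m_n \leq (1+o(1))\log s_n$. My plan is to produce, for every matroid $M$ on $[n]$, a compressed encoding whose length matches $\log s_n$ to first order. A preliminary reduction fixes the rank: since $m_n \leq (n+1)\max_r m_{n,r}$, the factor $n+1$ contributes only $O(\log n)$ to $\log m_n$, negligible against $\log s_n$. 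Fix $r = \lfloor n/2 \rfloor$; other ranks should contribute exponentially less and be dispatched by a Piff-style argument.

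For a rank-$r$ matroid $M$ with non-basis set $\ZZ \subseteq \binom{[n]}{r}$, I would encode $M$ by a pair $(\ZZ^*, \Delta)$, where $\ZZ^* \subseteq \ZZ$ is a stable set in the Johnson graph $J(n,r)$ and $\Delta$ is auxiliary data from which $\ZZ \setminus \ZZ^*$ can be reconstructed. The motivation is the structural fact that, in the paving case, whenever $X, Y \in \ZZ$ are adjacent in $J(n,r)$ (that is, $|X \cap Y| = r-1$), the closure $F \defeq \cl(X \cap Y)$ is a flat of rank $r-1$ containing $X \cup Y$, and \emph{every} $r$-subset of $F$ lies in $\ZZ$. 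Hence, within paving matroids, $\ZZ$ decomposes as a disjoint union of ``fat cliques'' $\binom{F}{r}$ for certain rank-$(r-1)$ flats $F$ with $|F| \geq r+1$, together with a residual part which is already stable in $J(n,r)$. Set $\ZZ^*$ to be the residual part plus one canonical $r$-subset representative from each fat clique, and let $\Delta$ list the underlying flats, each describable in $O(n)$ bits. The analogous but more intricate structural analysis for non-paving matroids either extends this compression directly, or alternatively one first shows that the number of non-paving matroids is itself $s_n^{1+o(1)}$ and reduces to the paving case.

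The counting then splits in two. A Sapozhenko-style container argument adapted to $J(n,r)$ should yield at most $2^{(1+o(1))\binom{n}{r}/n} = s_n^{1+o(1)}$ stable sets, already matching the target. For the number of lists $\Delta$, it suffices to show that any rank-$r$ matroid has at most $o(2^n / (n^{3/2} \log n))$ fat rank-$(r-1)$ flats. This is the \emph{main obstacle}: distinct such flats have disjoint $r$-subset cliques in $J(n,r)$, so summing their clique sizes over $\Delta$ gives at most $|\ZZ| \leq \binom{n}{r}$; if most flats have size polynomially larger than $r$, this forces $|\Delta|$ to be small, but if the typical flat has size close to the minimum $r+1$, one must play the entropy cost of $\Delta$ off against the saving in entropy of the residual stable set. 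Closing this last factor carefully, and handling the non-paving regime, constitutes the technical heart of the plan.
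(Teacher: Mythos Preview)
The statement you are attempting to prove is labelled \emph{Conjecture} in the paper, not Theorem. The paper does \emph{not} prove it; the strongest result obtained is Corollary~\ref{relative_bound}, namely $\log\log m_n \le \log\log s_n + 1 + o(1)$, and Section~\ref{s:outtro} explicitly discusses the remaining factor-$2$ gap in $\log m_n$ versus $\log s_n$ as open. So there is no ``paper's own proof'' to compare against; you are proposing a plan of attack on an open problem.

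Your proposal is candid that it is a plan rather than a proof, and the gap you flag as the ``main obstacle'' is genuine and unresolved. A few concrete points:

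\begin{itemize}
\item Your sufficient condition on $|\Delta|$ is mis-stated. Describing each fat hyperplane costs $\Theta(n)$ bits, so to make $\log(\#\Delta\text{'s}) = o(\log s_n) = o\!\left(\binom{n}{\lfloor n/2\rfloor}/n\right)$ you would need $|\Delta| = o\!\left(\binom{n}{\lfloor n/2\rfloor}/n^{2}\right)$, not merely $o\!\left(2^n/(n^{3/2}\log n)\right)$. The easy packing bound $\sum_{F\in\Delta}\binom{|F|}{r}\le\binom{n}{r}$ only gives $|\Delta|\le \binom{n}{r}/(r+1)\approx (2/n)\binom{n}{\lfloor n/2\rfloor}$, which is larger by a factor of $n$ than what you need. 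Closing this factor of $n$ is exactly where the paper's own method also stalls.
\item The claim that ranks $r\neq\lfloor n/2\rfloor$ ``contribute exponentially less'' is false: for $r$ within, say, $\sqrt{n}$ of $n/2$ the quantities $m_{n,r}$ are all of comparable doubly-exponential order. The paper handles all $r$ simultaneously via the uniform estimates of Lemma~\ref{alpha_bound}; a Piff-style argument does not dispatch these ranks.
\item The non-paving case is not addressed beyond a disjunction of two hopes. The structural fact you exploit (adjacent non-bases lie in a common hyperplane) fails once there are dependent $(r{-}1)$-sets, and the paper's machinery of local covers (Lemmas~\ref{local_cover} and~\ref{local_cover2}) is precisely what was developed to cope with this; even with it, only the $+1$ bound is reached.
\item The line ``a Sapozhenko-style container argument \ldots\ should yield at most $2^{(1+o(1))\binom{n}{r}/n}$ stable sets'' overshoots what is known: the paper's container bound (Theorem~\ref{thm:indsets}) gives exponent $(2+o(1))\binom{n}{r}/n$, and whether the $2$ can be improved to $1$ is tied to the unresolved question of the independence number of $J(n,\lfloor n/2\rfloor)$. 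That said, for your encoding the count of stable sets is tautologically $s_{n,r}\le s_n$, so this particular sentence is not where your argument breaks; the break is entirely in controlling $\Delta$.
\end{itemize}

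In short: the decomposition into a stable part plus ``fat'' hyperplanes is a reasonable heuristic (and for paving matroids your structural claims are correct), but the entropy accounting for the hyperplane list is off by a full factor of $n$ from what you need, and nothing in the proposal closes it. This is the same barrier the paper encounters and leaves open.
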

Note that this is in fact a much weaker statement as $\log \log (\cdot)$ is a very ``forgiving'' function, e.g.~ if $m_n = \Omega(ns_n)$ or even if $m_n = \Omega(2^{2^{\sqrt{n}}}s_n)$, then $\frac{s_n}{m_n} \to 0$, while still $\log \log m_n = \log \log s_n + o(1)$.

\subsection{Our results}

Our main result is a substantial strengthening of the upper bound on $m_n$. 

\statetheorem{thm:mn} {The number of matroids $m_n$ on $n$ elements satisfies $$\log \log m_n \le n - \frac{3}{2} \log n + \frac{1}{2} \log \frac{2}{\pi} + 1 + o(1).$$}
Combining Theorem \ref{thm:mn} with Knuth's lower bound \eqref{knuth:lb} on the number of sparse paving matroids $s_n$, this gives:

\statecorollary{relative_bound}{$\log \log m_n \le \log\log s_n + 1 + o(1)$.}

Thus, this result comes quite close to Conjecture \ref{main:conj}, except for the additive $+1$ term. In particular,
it implies that the number of matroids is indeed much closer to Knuth's lower bound, and perhaps also lends support to the conjecture that most matroids are indeed sparse paving.


\subsection{Our Techniques}

The proof of Theorem \ref{thm:mn} is based on a combination of the following:
\begin{enumerate}
\item A technique for proving refined upper bounds on the total number of stable sets in a graph.
\item Defining a notion of {\em local cover} of a matroid, which serves as a short certificate to identify the bases in the
neighborhood of an $r$-set. Combining the local covers for a carefully chosen set of $r$-sets then serves as a compressed representation of
any matroid.
\end{enumerate}

To see the connection to the total number of stable sets, note that any upper bound on $m_n$ is also an upper bound on $s_n$. As $s_n= s_{n,0} + s_{n,1} + \ldots + s_{n,n}$, where $s_{n,r}$ denotes the number of sparse paving matroids of rank $r$, and $s_{n,r}$ is precisely the total number of stable sets in the Johnson graph $J(n,r)$, any method to upper bound $m_n$ must also bound the number of such sets.

We first give an overview of each of these two ideas, and then describe how they are combined to prove Theorem \ref{thm:mn}.
These ideas are already useful by themselves to improve the currently known bounds on $s_n$ and $m_n$.
In Section \ref{s:bound-weak} we show how local covers can be used in a very simple way to obtain the following bound.
\statetheorem{thm:mn_loglog}{$\log \log m_n \le n - \frac{3}{2} \log n + 2\log\log n + O(1)$.}
While this bound is weaker than the one in Theorem \ref{thm:mn}, it already improves Piff's upper bound substantially, and matches Knuth's lower bound up to the additive $O(\log \log n)$ term.

Similarly, in Section \ref{s:sparsepaving} we show how the refined counting technique for stable sets implies the following bound on the number of sparse paving matroids.

\statetheorem{thm:sn}{$\log \log s_n \le n - \frac{3}{2} \log n + \frac{1}{2} \log \frac{2}{\pi} +1+ o(1)$.}
Previously, the best known upper bound on $s_n$ seems to be $\log \log s_n  \le n - \frac{3}{2} \log n + O(\log \log n)$ \cite{MayhewWelsh2010} (we sketch an argument below).

Finally, we prove Theorem \ref{thm:mn} in Section \ref{s:bound}.

\subsubsection{Upper-bounding $m_n$ via local covers:}
Recall that $m_{n,r}$ denotes the number of matroids of rank $r$ on $n$ elements. As $m_n = m_{n,0} + m_{n,1} + \ldots + m_{n,n}$, it suffices to bound each $m_{n,r}$ separately.
 For a matroid of rank $r$, let us call a collection of {\em flats} a {\em flat cover} if it completely describes the matroid by certifying for each $r$-set whether it is a basis or not.

  A related notion is that of a {\em local cover}: a collection of flats that allows us to identify the bases in the neighborhood of some fixed $r$-set.
Our main observation is that given any matroid, for every $r$-set, one can associate to it a local cover consisting of at most $r$ flats.
This implies that if we pick any dominating set $D$ in the Johnson graph and list all the local covers for the vertices in $D$, then this gives a valid flat cover consisting of at most $|D|r$ flats.
Together with standard arguments about the existence of small dominating sets in any regular graph, this implies
that each matroid $M \in \MM_{n,r}$ can be described by a ``small'' flat cover, which gives the bound in Theorem \ref{thm:mn_loglog}.

\subsubsection{Upper-bounding $s_n$ via stable sets:}
As $s_n = s_{n,0} + s_{n,1} + \ldots + s_{n,n}$ it suffices to bound each of these terms separately.
For a graph $G$,
let $i(G)$ denote the number of stable sets in $G$, and recall that $s_{n,r} = i(J(n,r))$.
While it is hard to obtain any reasonable estimate of $i(G)$ for general graphs, it was shown in \cite{MayhewWelsh2010} that
\begin{equation}
\label{triv-sn-count}
\log \log s_n \leq n - \frac{3}{2} \log n +  \log \log n + O(1)
\end{equation}
One may argue this as follows.
Let $G=(V,E)$ be a $d$-regular graph and let $-\lambda$ denote the smallest eigenvalue of its adjacency matrix. Then
the size of a maximum stable set of $G$ is at most $|V|\lambda/(d+\lambda)$ by Hoffmann's bound (see e.g. Theorem 3.5.2 of \cite{BrouwerHaemers2012}, or our Corollary \ref{cor:degree}). Let us denote $\alpha = \lambda/(d+\lambda)$. This implies that
\begin{equation}
\label{eq:triv-count}
i(G) \leq \sum_{j=0}^{\alpha|V|} \binom{|V|}{j}.
\end{equation}

For the graph $J(n,r)$ it is known that $\alpha$ is at most  $2/n$, which implies that the maximum stable set has size at most $\alpha N$ where $N= \binom{n}{r}$.
Note that this bound is quite good and is within a factor $2$ of the size of the explicit stable set used in Knuth's lower bound.
Applying~\eqref{eq:triv-count} to $J(n,r)$ then gives $s_{n,r} \leq 2^{(1+o(1))2N\log n/n}$, which implies the bound~\eqref{triv-sn-count}.
We note that the proof of~\eqref{triv-sn-count} in~\cite{MayhewWelsh2010} is similar, except that there the same bound on the maximal size of a stable set  of $J(n,r)$ was shown by a combinatorial argument.

It turns out however that counting all the subsets in \eqref{eq:triv-count} is rather wasteful and that this
bound can be improved. In particular, we show:
\statetheorem{thm:indsets}{Let $G$ be a $d$-regular graph on $N$ vertices with smallest eigenvalue $-\lambda$, with $d>0$. Then
	$i(G)\leq \sum_{i=0}^{\lceil\sigma N\rceil} {N \choose i}2^{\alpha N},$
where $\alpha= \frac{\lambda}{d + \lambda}$ and $\sigma=\frac{\ln(d+1)}{d+\lambda}$.}

For the graph $J(n,r)$, we find that $\sigma N \leq \frac{8 \ln n}{n^2}N$  and hence  this gives the stronger bound $i(G) \leq 2^{(2+o(1))N/n}$.
As $\alpha N = 2N/n$ was our bound on the size of the maximum stable set, this bound on $i(G)$ roughly implies that most stable sets occur as subsets of a few large stable sets of size $\alpha N$. Using standard bounds on the binomial coefficients, this directly implies Theorem \ref{thm:sn}.


Our proof of Theorem \ref{thm:indsets} is based on a procedure for encoding stable sets that is originally due to Kleitman and Winston \cite{KleitmanWinston1982}. We remain very close to the description of the procedure as given in Alon, Balogh, Morris and Samotij \cite{ABMS2012}, to which we also refer for detailed references on the earlier uses of the procedure. Compared to \cite{ABMS2012}, we give a somewhat improved analysis (specifically, Lemma \ref{lemma:Sbound}) to obtain a sufficient bound in the parameter range that is of interest to us.

\subsubsection{The improved upper bound on $m_n$:}
To obtain the bound in Theorem \ref{thm:mn}, we combine the two ideas above.
The main observation is that given a matroid $M$,  if $X$ is a dependent $r$-set (i.e.~a non-basis) in $M$, then $X$ has a local cover consisting of at most $2$ flats
(as opposed to up to $r$ flats if $X$ was an arbitrary $r$-set).
Thus if we could construct a flat cover using few such local covers, then we would obtain a much smaller description of a matroid.
To this end, we generalize the procedure of Alon et al.\ ~\cite{ABMS2012} for encoding stable sets to more generally encode
flat covers of the kind described above using a few number of bits. This gives the improved bound on $m_{n,r}$ and hence on $m_n$.


Finally, we remark that the $+1$ additive gap in our upper bound on $m_n$ arises only because of the factor $2+o(1)$ gap between the known upper and lower bounds on the size of the maximum stable set in the graphs $J(n,r)$ for $r\approx n/2$. It is likely that reducing this gap could lead to improved bounds for $m_n$. In Section \ref{s:outtro}, we elaborate on this issue a bit further.

\section{Preliminaries}
\subsection{Matroids}
As mentioned previously, a matroid $M$ is specified by $M=(E,\II)$, where the sets in the collection $\II$ satisfy the independence axioms.
The elements of $\II$ are {\em independent}, the remaining elements of $2^E\setminus \II$ are {\em dependent}. The set $E$ is the {\em ground set}, and we say that $M$ is a matroid {\em on} $E$. There are various set systems and functions defined on $M$ that each allow one to distinguish between dependent and independent sets, such as the set of bases, the rank function, the circuits, the closure operator, etc. We define these notions and state some of their basic properties here, but for a detailed account of their interrelations and for proofs we refer to Oxley \cite{OxleyBook}.

A {\em basis} of $M$ is an inclusionwise maximal independent set of $M$. It follows from the independence axioms, that each basis has the same cardinality.
 In this paper, we will present matroids as  $M=(E,\BB)$, where $\BB$ is the set of bases of $M$. The following is an alternate characterization of matroids in terms of the basis axioms, which we shall need later. A set $\BB\subseteq 2^E$ is the set of bases of a matroid on $E$ if and only if $\BB\neq \emptyset$ and $\BB$ satisfies the {\em basis exchange axiom}
\begin{equation}\label{base_exchange}\text{for each }B, B'\in \BB\text{ and each }e\in B\setminus B'\text{ there exists an }f\in B'\setminus B\text{ such that } B-e+f\in\BB.\end{equation}
Here, we write $X+y \defeq X\cup\{y\}$ and $X-y \defeq X\setminus\{y\}$.

The {\em rank} of a set $X\subseteq E$ is $r_M(X)\eqdef\max\{|I|\mid I\subseteq X, ~I\in\II\}$, i.e.~the cardinality of any maximal independent set in $X$. The rank function is {\em submodular}:
$$r_M(X\cap Y)+r_M(X\cup Y)\leq r_M(X)+r_M(Y).$$
We write $r(M)\eqdef r_M(E)$. Then $r(M)$ is the common cardinality of all bases, the {\em rank of $M$}.
We say that an $r$-set $X$ is a {\em non-basis} if $r_M(X)< r$.
Clearly, a matroid of rank $r$ with set of bases $\BB$ is also uniquely defined by its set of {\em non-bases}, $\binom{E}{r} \setminus \BB$.

A {\em circuit} of $M$ is an inclusionwise minimal dependent set of $M$. We denote the set of circuits of $M$ by $\CC(M)$. By definition, each dependent set contains some circuit.
We will use  that if $X$ is an $r$-set with $r_M(X)=r(M)-1$, then it contains a unique circuit $C\subseteq X$.

In $M$, the {\em closure} of a set $X\subseteq E$ is the set $\cl_M(X)\eqdef\{e\in E\mid r_M(X+e)=r_M(X)\}$.
We will often use that $r_M(\cl_M(X))=r_M(X)$ for any set $X$, which follows easily from induction and the submodularity of the rank function.
A set $F\subseteq E$ is called a {\em flat} of $M$ if $\cl_M(F)=F$, and $\mathcal{F}(M)$ denotes the set of all flats of $M$.
As $\cl_M(\cl_M(X)) = \cl_M(X)$ for any set $X$, every closure $\cl_M(X)$ is a flat.

The following simple property of flats will be crucially used in our construction of flat covers: A set $X\subseteq E$ is dependent if and only if there exists a flat $F$ such that
$|X \cap F| > r_M(F)$.
In other words, $F$ acts as witness that $X$ contains a dependency when restricted to $F$.

The {\em dual } of $M$ is the matroid $M^*$ whose bases are $\BB^*=\{E\setminus B\mid B\in \BB\}$. The bases, circuits, rank, and closure of sets in $M^*$ are called the cobases, cocircuits, corank, and coclosure of sets in $M$, and we write $r^*_M(X)\eqdef r_{M^*}(X)$, $\CC^*(M)\eqdef\CC(M^*)$, $\cl^*_M\eqdef\cl_{M^*}$, etc.

The rank and corank functions of $M$ are related by
\begin{equation}\label{dualrank}r_M^*(X)=r_M(E\setminus X)-r(M)+|X|.\end{equation}


We write
$$\mathbb{M}_{n}\eqdef\{M\text{ a matroid}\mid E(M)=\{1,\ldots, n\}\}, \quad \mathbb{M}_{n,r}\eqdef\{M\in \mathbb{M}_n\mid~r(M)=r\}.$$
Also, we put
$m_n\eqdef|\mathbb{M}_n|,\quad m_{n,r}\eqdef|\mathbb{M}_{n,r}|.$

A matroid $M$ is {\em paving} if $|C|\geq r(M)$ for each circuit $C$ of $M$ (or equivalently if there is no dependent set of size $<r(M)$), and {\em sparse} if $M^*$ is paving. $M$ is said to be {\em sparse paving} if it is both sparse and paving. We write
$$s_n\eqdef|\{M\in\mathbb{M}_n\mid M\text{ is sparse paving}\}|,\quad s_{n,r}\eqdef|\{M\in\mathbb{M}_{n,r}\mid M\text{ is sparse paving}\}|.$$

\subsection{Bounds on binomial coefficients}
We will frequently use the following standard bounds.
\begin{equation}\label{binomial_bound}\binom{n}{r}\leq \left(\frac{\e n}{r}\right)^r\end{equation}
\begin{equation}\label{binomial_bound2}\frac{2^n}{\sqrt{n}}\left(\sqrt{\frac{2}{\pi}}-o(1)\right)\leq \binom{n}{\lfloor \frac{n}{2}\rfloor}\leq \frac{2^n}{\sqrt{n}}\sqrt{\frac{2}{\pi}}\end{equation}
We will also use the following bound on the sum of binomial coefficients for $k<n/2$:
\begin{equation}\label{binomial_bound3}\sum_{i=0}^k \binom{n}{i} \le \frac{n-(k-1)}{n-(2k-1)} \binom{n}{k}.\end{equation}
 This follows by upper bounding $\binom{n}{k-j}\leq \binom{n}{k} \left(\frac{k}{n-k+1}\right)^j$, and summing up the resulting geometric series. We are particularly interested in the case $k = o(n)$, which yields
\begin{equation}\label{binomial_bound3-1}\sum_{i=0}^k \binom{n}{i} \le (1+o(1)) \binom{n}{k}.\end{equation}

\subsection{The Johnson graph}
If $E$ is a finite set and $r \le |E|$, then we write $$\binom{E}{r} \defeq \{X \subseteq E \mid |X| = r\}$$ for the collection of $r$-subsets of $E$. We say that $X, Y \in \binom{E}{r}$ are adjacent (notation: $X \sim Y$) if they have Hamming distance $|X \triangle Y| = 2$ (or equivalently: $|X \cap Y| = r-1$).
The {\em Johnson graph} $J(E,r)$ is defined as the graph with vertex set $\binom{E}{r}$, in which two vertices $X$ and $Y$ are adjacent if and only if $X \sim Y$.
We abbreviate $J(n,r) \defeq J([n],r)$.
For any $r$-set $X \in \binom{E}{r}$, we write $N(X) \defeq \{Y \in \binom{E}{r} \mid X \sim Y\}$ for the neighborhood of $X$ in $J(E,r)$.
Obviously, $J(E,r) \cong J(n,r)$ for any $n$-set $E$. \\

The following lemma points out the connection between the Johnson graph and sparse paving matroids. It was essentially shown by Piff and Welsh \cite{PiffWelsh1971} in proving an earlier lower bound on $s_n$.

\begin{lemma}\label{lemma:sparsepaving_johnson}For $0 < r < n$, sparse paving matroids $M \in \MM_{n,r}$ correspond one-to-one to stable sets in $J(n,r)$.\end{lemma}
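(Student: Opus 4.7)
The plan is to exhibit the explicit map $M \mapsto \mathcal{N}(M) \defeq \binom{[n]}{r}\setminus \BB(M)$, the set of non-bases of $M$, and to show it is a bijection between sparse paving matroids in $\MM_{n,r}$ and stable sets of $J(n,r)$, with inverse given by $\mathcal{N} \mapsto M(\mathcal{N})$, where $M(\mathcal{N})$ has basis collection $\BB \defeq \binom{[n]}{r}\setminus \mathcal{N}$. Since both directions are defined by complementation in $\binom{[n]}{r}$, they are automatically mutually inverse once both are well-defined, so I would split the argument into two claims: (a) if $M \in \MM_{n,r}$ is sparse paving, then $\mathcal{N}(M)$ is stable in $J(n,r)$; (b) if $\mathcal{N} \subseteq \binom{[n]}{r}$ is stable in $J(n,r)$, then $\BB$ is the set of bases of a sparse paving matroid of rank $r$.

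For (a), I would argue by contradiction from a pair of adjacent non-bases $X \sim Y$, so $|X\cap Y|=r-1$ and $|X\cup Y|=r+1$. Since $M$ is paving, every set of size at most $r-1$ is independent, so $r_M(X\cap Y)=r-1$; since $X$ and $Y$ are non-bases, $r_M(X),r_M(Y)\le r-1$; submodularity of the rank function then gives $r_M(X\cup Y)\le r-1$. But $M^{*}$ being paving translates, via the duality formula \eqref{dualrank}, to every $(r+1)$-subset of $E$ being spanning in $M$, contradicting $r_M(X\cup Y)\le r-1$.

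For (b), the main step is to verify the basis exchange axiom \eqref{base_exchange} for $\BB$. Given $B,B'\in \BB$ and $e\in B\setminus B'$, if $|B'\setminus B|=1$ the unique $f\in B'\setminus B$ yields $B-e+f=B'\in \BB$ directly; if $|B'\setminus B|\ge 2$, any two distinct $f_1,f_2\in B'\setminus B$ produce $r$-sets $B-e+f_1$ and $B-e+f_2$ that share the $(r-1)$-set $B-e$, hence are adjacent in $J(n,r)$, so stability of $\mathcal{N}$ forces at least one of them to lie in $\BB$. Nonemptiness of $\BB$ follows from the fact that $J(n,r)$ has at least one edge whenever $0<r<n$, so $\mathcal{N}$ cannot exhaust $\binom{[n]}{r}$. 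This yields a matroid $M\in \MM_{n,r}$, and sparse paving then follows by the same mechanism in two flavours: a dependent $(r-1)$-set $A$ would force every $r$-superset of $A$ to be a non-basis, producing a clique of size $n-r+1\ge 2$ inside $\mathcal{N}$; a non-spanning $(r+1)$-set $Y$ would force every $r$-subset of $Y$ to be a non-basis, producing a clique of size $r+1\ge 2$ inside $\mathcal{N}$. Both contradict stability.

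The only step I expect to require genuine care is the verification of basis exchange, and even there the work is a clean case split on $|B'\setminus B|$. The unifying idea is that ``two adjacent $r$-sets cannot both be non-bases'', and this single fact drives all four subclaims above; the hypothesis $0<r<n$ is needed only to guarantee that the cliques of $r$-sets arising in each argument have at least two elements, so that stability can be invoked.
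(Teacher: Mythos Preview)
Your proof is correct and follows essentially the same route as the paper: submodularity of rank to rule out adjacent non-bases in part (a), and the observation that two $r$-sets sharing an $(r-1)$-set are adjacent in $J(n,r)$ to verify basis exchange in part (b). Your argument is in fact slightly more complete than the paper's, which stops after showing that $\BB=\binom{[n]}{r}\setminus I$ satisfies the basis axioms and does not explicitly verify that the resulting matroid is sparse paving; your clique arguments for that step are correct and fill this small gap.
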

\proof
Let $E=[n]$. We first show that the non-bases of a rank-$r$ sparse paving matroid $M$ on $E$ form a stable set in $J(E,r)$. Suppose that there are non-bases $X, Y \in \binom{E}{r} \setminus \BB(M)$ such that $X \sim Y$, then we would have
$$r_M(X\cap Y)+r_M(X\cup Y)\leq r_M(X)+r_M(Y)<2r-1,$$
so that either $r_M(X\cap Y)<r-1=|X\cap Y|$ or $r_M(X\cup Y)<r$. In the former case, $X\cap Y$ is a dependent set of size $<r(M)$, which contradicts that $M$ is paving. In the latter case, it follows from \eqref{dualrank} that
$$r_M^*(E\setminus (X\cup Y)) = r_M(X\cup Y) - r(M) + |E\setminus(X\cup Y) | < r -r + |E\setminus(X\cup Y)| = n-r-1 = r^*(M)-1,$$ so that $E\setminus (X\cup Y)$ is a dependent set of $M^*$ of size $<r(M^*)$, which contradicts that $M^*$ is paving. \\

Next, suppose that $I$ is a stable set in $J(E,r)$. We will show that $\BB \defeq \binom{E}{r} \setminus I$ forms a valid collection of bases for some matroid on $E$.

First, it cannot be that $\BB=\emptyset$ as this would imply that $I = \binom{E}{r}$ and hence that $J(E,r)$ has no edges.
 So the only way $\BB$ may fail to be a family of bases is if it fails the basis exchange axiom \eqref{base_exchange}. That is, there are distinct $B,B'\in \BB$ and an $e\in B\setminus B'$ such that $B-e+f\not \in \BB$ for all $f\in B'\setminus B$.
  Now, it must be the case that $|B'\setminus B|>1$, for otherwise it would hold that $B-e+f=B'\in\BB$ for the only $f\in B'\setminus B$.
  So, let $f, f'$ be distinct elements of $B'\setminus B$, and consider $N=B-e+f$ and $N'=B-e+f'$. Since the base exchange axiom fails, it follows that both $N,N'\in I$. On the other hand, $|N\triangle N'|=|\{f,f'\}|=2$, i.e.\ ~$N\sim N'$, contradicting independence of $I$.
\endproof

\subsection{\label{ss:prelim_knuth}Knuth's lower bound}
In \cite{Knuth1974}, Knuth argues that if $J(n,r)$ has a stable set $I$ of size $k$, then $J(n,r)$ has at least $2^k$ stable sets, as each subset of $I$ is itself stable. Knuth constructed a stable set of size $k = \frac{1}{2n}\binom{n}{r}$, but Theorem 1 in \cite{GrahamSloane1980} shows the existence of a stable set of size at least $k = \frac{1}{n}\binom{n}{r}$.

We sketch the construction in \cite{GrahamSloane1980}. Identifying the vertices of $J(n,r)$ with their incidence vectors, we view them as $\{0,1\}$ vectors $(x_1,\ldots,x_n)$ with exactly $r$ 1's. It is easily verified that the functional $\{0,1\}^n \to \mathbb{Z}/n\mathbb{Z}$, defined by $$(x_1, x_2, \ldots, x_n) \mapsto \sum_{i=1}^n i x_i \mod n$$ gives a valid $n$-vertex-coloring of $J(n,r)$. As there are $n$ color classes, at least one of them should contain at least $\frac{1}{n}{n \choose r}$ vertices.

Picking such a stable set gives
$\log(s_{n,r})\geq \frac{1}{n}\binom{n}{r}$, and in particular $\log(s_n)\geq \log (s_{n,\lfloor n/2\rfloor})\geq \frac{2^n}{n\sqrt{n}}\left(\sqrt{\frac{2}{\pi}}-o(1)\right)$ by \eqref{binomial_bound2}. Therefore,
\begin{equation}\label{eq:knuthlowerbound}\log\log s_n\geq n-\frac{3}{2} \log n +\frac{1}{2}\log\frac{2}{\pi}-o(1).\end{equation}

\subsection{\label{ss:prelim_piff}Piff's upper bound}
To prove his upper bound on $m_n$, Piff \cite{Piff1973} uses that any matroid $M$ is characterized by the set of all closures of circuits and their ranks, i.e.\ ~by the collection
\begin{equation}\label{piff}\mathcal{K}(M)\defeq\{(\cl_M(C), r_M(C)) \mid C\text{ a circuit of }M\}.\end{equation}
This completely defines $M$ as a set $X\subseteq E(M)$ is dependent in $M$ if and only if $|X\cap\cl_M(C)|>r_M(C)$ for some circuit $C$ of $M$.
He then uses the following counting argument to bound the size of $\mathcal{K}(M)$.
\begin{lemma}If $M\in\mathbb{M}_n$, then $|\mathcal{K}(M)|\leq \frac{1}{n+1}2^{n+1}$.
\end{lemma}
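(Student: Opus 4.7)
My plan is to stratify by rank and apply a double-counting argument to each stratum. Let $\mathcal{K}_k \defeq \{F \in \mathcal{K}(M) : r_M(F) = k\}$ be the set of rank-$k$ circuit closures appearing in $\mathcal{K}(M)$. Since each pair $(F,k)\in\mathcal{K}(M)$ is determined by $F$ alone (as $k=r_M(F)$), we have $|\mathcal{K}(M)|=\sum_{k\ge 0}|\mathcal{K}_k|$, and it suffices to bound each $|\mathcal{K}_k|$.

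The key observation is that every $F\in\mathcal{K}_k$ has at least $k+1$ bases. Writing $F=\cl_M(C)$ for a circuit $C$ of $M$ gives $|C|=r_M(C)+1=k+1$, and then the $k+1$ punctured circuits $C\setminus\{e\}$ for $e\in C$ are distinct independent $k$-subsets of $F$, each a basis of $F$ (all having cardinality $k=r_M(F)$). Conversely, every basis $B$ of $F$ satisfies $\cl_M(B)=F$, since $\cl_M(B)\subseteq F$ is a flat of the same rank as $F$, and two flats of the same rank with one contained in the other must coincide.

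Now double count pairs $(F,B)$ with $F\in\mathcal{K}_k$ and $B$ a basis of $F$. On one hand the count is at least $(k+1)|\mathcal{K}_k|$ by the observation above. On the other hand, the map $(F,B)\mapsto B$ is injective because $F=\cl_M(B)$ recovers $F$; hence the count is at most the number of independent $k$-subsets of $M$, which is at most $\binom{n}{k}$. Therefore $|\mathcal{K}_k|\le \binom{n}{k}/(k+1)=\binom{n+1}{k+1}/(n+1)$, and summing gives
\[|\mathcal{K}(M)| \le \frac{1}{n+1}\sum_{k=0}^{n}\binom{n+1}{k+1} = \frac{2^{n+1}-1}{n+1} < \frac{2^{n+1}}{n+1}.\]

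The only substantive step is the $k+1$-bases lower bound, which is forced by puncturing a single spanning circuit of $F$; the rest is a clean double count combined with the identity $\binom{n}{k}/(k+1)=\binom{n+1}{k+1}/(n+1)$. I do not anticipate any real obstacle, since once the fibre lower bound is in place the factor $1/(n+1)$ emerges automatically from Pascal's rule.
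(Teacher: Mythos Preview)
Your proof is correct and follows essentially the same approach as the paper: both stratify $\mathcal{K}(M)$ by rank, observe that each rank-$k$ circuit closure $F=\cl_M(C)$ admits at least $k+1$ distinct $k$-sets (the punctured circuits $C-e$) whose closure recovers $F$, and conclude $|\mathcal{K}_k|\le \binom{n}{k}/(k+1)=\binom{n+1}{k+1}/(n+1)$ before summing. Your version is somewhat more explicit about the double-counting and the injectivity via $F=\cl_M(B)$, but the argument is the same.
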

\proof Fix an $i<n$. Let $C\in \CC(M)$ be a circuit such that $|C|=i+1$. Then for each $e\in C$, we have $\cl_M(C)=\cl_M(C-e)$, i.e. there are $i+1$ sets $C-e\in\binom{E}{i}$ that map to $\cl_M(C)$. It follows that
$$|\{(\cl_M(C),r)\in \mathcal{K}(M)\mid r=i\}|\leq \frac{1}{i+1}\binom{n}{i}=\frac{1}{n+1}\binom{n+1}{i}.$$
Summing these upper bounds over all $i$, we get
\begin{equation*}|\mathcal{K}(M)|=\sum_{i=0}^{n-1}|\{(\cl_M(C),r)\in \mathcal{K}(M)\mid r=i\}|\leq \sum_{i=0}^{n-1}\frac{1}{n+1}\binom{n+1}{i}\leq \frac{1}{n+1}2^{n+1}.\qed\end{equation*}

It follows that the number of matroids on a set $E$ of $n$ elements is at most the number of subsets $\mathcal{K}\subseteq 2^{E}\times\{0,\ldots, n\}$ with $|\mathcal{K}|\leq \frac{1}{n+1}2^{n+1}$. Thus, together by  \eqref{binomial_bound3-1}
$$m_n \leq \sum_{i=0}^{2^{n+1}/(n+1)} \binom{2^n(n+1)}{i}  = (1+o(1))\binom{2^n(n+1)}{\frac{1}{n+1}2^{n+1}} .$$

By \eqref{binomial_bound}, this gives
$\log m_n \leq \frac{2^{n+1}}{n+1}\log \frac{\e(n+1)^2}{2} + o(1)$
and hence
$\log\log m_n\leq n-\log n+\log\log n+O(1).$
\qed

\section{\label{s:bound-weak}A weaker upper bound on the number of matroids}

In this section, we introduce the notion of flat covers and local covers and use them to show that each matroid in $\MM_{n,r}$ has a concise description. Using this, we then bound $m_{n,r}$.
\begin{definition}[Flat cover]
Let $M=(E,\BB)$ be a matroid with $n$ elements, of rank $r$.  For a set $X\subseteq E$, we say that a flat $F\in \FF(M)$ {\em covers} $X$ if $|F\cap X|>r_M(F)$.
We say that a set of flats $\mathcal{Z}$ is a {\em flat cover} of $M$ if  each non-basis $X\in\binom{E}{r}\setminus\BB$ is covered by some $F\in\mathcal{Z}$.
\end{definition}
Note that  if $\mathcal{Z}$ covers $M$, then  $M$ is characterized by $E,r$ and the collection $$\{(F,r_M(F))\mid F\in \mathcal{Z}\},$$
since  by definition of a cover, we have $\BB=\{X\in \binom{E}{r}\mid |X\cap F|\leq r_M(F)\text{ for all }F\in \ZZ\}$.

\begin{definition}[Local cover]
For an $r$-set  $X \in \binom{E}{r}$, we say that a collection of flats $\mathcal{Z}_X \subseteq \FF(M)$ is a {\em local cover} at $X$ if $\mathcal{Z}_X$ covers all the non-bases $Y \in (N(X) \cup \{X\})\setminus \BB$. \\
\end{definition}
\ignore{The set of closures of circuits $\{\cl_M(C)\mid C\in\CC(M)\}$ that  Piff uses to characterize $M$ is also a cover of the nonbases of size at most $2^{n+1}/(n+1)$. In this section, we show that smaller covers exist.}
\begin{lemma} \label{local_cover} Let $M\in\MM_{n,r}$. For each $r$-set $X \in  \binom{E}{r}$, there is a local cover $\mathcal{Z}_X$ such that $|\mathcal{Z}_X|\leq r$.
\end{lemma}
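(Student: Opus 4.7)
The plan is to exhibit an explicit local cover of size at most $r$ by taking, for each $x \in X$, the flat $F_x \defeq \cl_M(X-x)$ and setting $\mathcal{Z}_X \defeq \{F_x \mid x \in X\}$. Since $|X|=r$, the collection $\mathcal{Z}_X$ has at most $r$ members. Two elementary properties of these flats will carry the argument: $X - x \subseteq F_x$, and $r_M(F_x) = r_M(\cl_M(X-x)) = r_M(X-x) \le r-1$ by the standard identity $r_M(\cl_M(\cdot)) = r_M(\cdot)$.

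To verify the cover property, I would take an arbitrary non-basis $Y \in (N(X) \cup \{X\}) \setminus \BB$ and produce some $x$ for which $F_x$ covers $Y$, i.e., $|Y \cap F_x| > r_M(F_x)$. For $Y = X - x + y \in N(X)$ with $x \in X$ and $y \notin X$, compute $|Y \cap F_x| = (r-1) + [y \in F_x]$ and split on whether $y \in F_x$. If $y \in F_x$, then $|Y \cap F_x| = r$, which exceeds $r_M(F_x) \le r-1$. If $y \notin F_x$, then $r_M(Y) = r_M(X-x) + 1$, and the hypothesis that $Y$ is a non-basis forces $r_M(X-x) \le r-2$, so $|Y \cap F_x| = r-1 > r_M(F_x)$. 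Either way $F_x$ covers $Y$.

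The case $Y = X$ (with $r_M(X) < r$) is handled by the analogous dichotomy applied to an arbitrary $x \in X$, now splitting on whether $x \in F_x$. In the $x \in F_x$ branch one gets $|X \cap F_x| = r > r_M(F_x)$; in the $x \notin F_x$ branch one uses $r_M(X) = r_M(X-x) + 1 < r$ to conclude $r_M(F_x) \le r-2 < r-1 = |X \cap F_x|$.

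There is no serious obstacle here. The single slightly delicate point is the $y \notin F_x$ subcase of the neighbor analysis, where the hypothesis that $Y$ is a non-basis is really used: it excludes $r_M(X-x) = r-1$, which would otherwise make $Y$ a basis. Beyond that, the whole proof is bookkeeping with ranks and closures, and the same construction happens to certify $X$ itself automatically whenever $X$ is a non-basis, so no separate flat needs to be added for that case.
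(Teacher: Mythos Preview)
Your proof is correct and follows essentially the same approach as the paper: both take $\mathcal{Z}_X = \{\cl_M(X-x)\mid x\in X\}$ and verify the local cover property via the basic identities $X-x\subseteq \cl_M(X-x)$ and $r_M(\cl_M(X-x))=r_M(X-x)$. The only cosmetic difference is in the case analysis---the paper argues by contrapositive (if $\cl_M(X-x)$ fails to cover $Y=X-x+y$ then an equality chain forces $r_M(X-x)=r-1$ and $y\notin\cl_M(X-x)$, so $Y$ is a basis), whereas you split explicitly on whether $y\in F_x$ (respectively $x\in F_x$); the content is identical.
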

\proof
Let $X$ be a fixed $r$-set. Take $\mathcal{Z}_X\eqdef\{\cl_M(X-x)\mid x\in X\}.$ Then clearly $|\mathcal{Z}_X|\leq r$. We consider a $Y\in N(X)\cup\{X\}$.

If $Y=X$ and $X$ is dependent, then $X\subseteq\cl_M(X-x_0)$ for some $x_0\in X$. Then $\cl_M(X-x_0)$ covers $X$, as
$$|X\cap\cl_M(X-x_0)|=|X|=r>r_M(X) \geq r_M(X-x_0)= r_M(\cl_M(X-x_0)).$$

If $Y\in N(X)$, then $Y=X-x+y$ for some $x\in X$ and $y\in E\setminus X$. If $\cl_M(X-x)$ covers $Y$, we are done. Otherwise,
$$r-1=|X-x|\leq |\cl_M(X-x)\cap Y|\leq r_M(\cl_M(X-x))\leq r-1$$
so that equality holds throughout, and in particular $r_M(X-x)=r-1$ and $y\not\in\cl_M(X-x)$. It follows that $r_M(Y)=r_M(X-x+y)=r$, so that $Y$ is a basis and it is not required to cover $Y$.
\endproof
If $G=(V,E)$ is a graph, then a set $D \subseteq V$ is {\em dominating} if $D\cup N(D)=V$.
The point of introducing local covers is that one can construct a small flat cover from a collection of local covers at the vertices in some small dominating set, as every non-basis in the matroid will be covered by this collection.
By standard probabilistic arguments (see Theorem 1.2.2 of \cite{AlonSpencerBook}),
 one has:
\begin{lemma}\label{domset} $J(n,r)$ has a dominating set of cardinality $\frac{\ln(r(n-r)+1)+1}{r(n-r)+1}\binom{n}{r}$.\end{lemma}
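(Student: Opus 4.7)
The plan is to use the standard probabilistic argument for dominating sets (as hinted by the reference to Alon--Spencer Theorem 1.2.2). First I would record the two easy structural facts about $J(n,r)$: the number of vertices is $N=\binom{n}{r}$, and the graph is regular of degree $d=r(n-r)$, since from a vertex $X\in\binom{[n]}{r}$ one obtains a neighbor $X-x+y$ by choosing $x\in X$ and $y\in [n]\setminus X$. Thus it suffices to prove the more general statement that every $d$-regular graph on $N$ vertices admits a dominating set of size at most $\tfrac{\ln(d+1)+1}{d+1}\,N$, and substitute $d=r(n-r)$ at the end.

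For the general claim, I would sample a random subset $S\subseteq V$ by including each vertex independently with probability $p\in(0,1)$ to be optimized later. Let $T$ be the set of vertices $v\in V\setminus S$ with no neighbor in $S$, that is, the vertices left undominated. Then $S\cup T$ is by construction a dominating set. Taking expectations, $\mathbb{E}[|S|]=pN$, and for each $v$ the event $v\in T$ requires that $v$ and its $d$ neighbors are all outside $S$, so
\begin{equation*}
\mathbb{E}[|T|] \le N(1-p)^{d+1} \le N e^{-p(d+1)}.
\end{equation*}
Combining,
\begin{equation*}
\mathbb{E}[|S\cup T|] \le pN + N e^{-p(d+1)}.
\end{equation*}

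Now I would choose $p=\tfrac{\ln(d+1)}{d+1}$, which is in $(0,1)$ whenever $d\ge 1$ (the case $d=0$ is trivial since then $V$ itself must serve as the dominating set, and $\binom{n}{r}=1$ only when $r\in\{0,n\}$, making the statement degenerate but still valid). With this choice $e^{-p(d+1)}=1/(d+1)$, so $\mathbb{E}[|S\cup T|]\le N\cdot \tfrac{\ln(d+1)+1}{d+1}$. By the probabilistic method there exists an outcome for which $|S\cup T|$ is at most this expectation, yielding a dominating set of the required size. Substituting $d=r(n-r)$ and $N=\binom{n}{r}$ gives the bound stated in the lemma.

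There is no real obstacle here: the only small thing to check is that $p\in(0,1)$, which is straightforward, and that the boundary cases $r\in\{0,n\}$ (where $J(n,r)$ is a single vertex) are handled vacuously. The argument is completely standard, and I would expect to write it in just a few lines.
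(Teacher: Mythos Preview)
Your proof is correct and is precisely the standard probabilistic argument that the paper invokes (Alon--Spencer, Theorem~1.2.2) in lieu of a written proof; the paper does not spell out the details itself. The only content beyond the citation is the observation that $J(n,r)$ is $r(n-r)$-regular on $\binom{n}{r}$ vertices, which you have included.
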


\begin{corollary} \label{crl:cover} Let $M\in\MM_{n,r}$. Then $M$ has a flat cover $\mathcal{Z}$ with
$|\ZZ|\leq r\frac{\ln(r(n-r)+1)+1}{r(n-r)+1}\binom{n}{r}$.
\end{corollary}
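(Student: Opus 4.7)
The plan is to combine Lemma \ref{local_cover} with Lemma \ref{domset} in the most direct way possible. First I would invoke Lemma \ref{domset} to obtain a dominating set $D$ of the Johnson graph $J(n,r)$ of size at most $\frac{\ln(r(n-r)+1)+1}{r(n-r)+1}\binom{n}{r}$. (Note that $J(n,r)$ is $r(n-r)$-regular on $\binom{n}{r}$ vertices, so this bound is simply the standard $\frac{\ln(d+1)+1}{d+1}N$ dominating set bound applied to this graph.)

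Next, for every $X \in D$ I would apply Lemma \ref{local_cover} to the matroid $M$ at the $r$-set $X$, obtaining a local cover $\ZZ_X \subseteq \FF(M)$ of cardinality at most $r$. I would then simply set
$$\ZZ \defeq \bigcup_{X \in D} \ZZ_X,$$
so that $|\ZZ| \leq r |D| \leq r \cdot \frac{\ln(r(n-r)+1)+1}{r(n-r)+1}\binom{n}{r}$, matching the desired bound.

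It then remains to check that $\ZZ$ really is a flat cover of $M$. Let $Y \in \binom{E}{r} \setminus \BB$ be a non-basis. Since $D$ is dominating in $J(n,r)$, either $Y \in D$, or $Y \in N(X)$ for some $X \in D$; in either case there is an $X \in D$ with $Y \in N(X) \cup \{X\}$. By definition of a local cover at $X$, the collection $\ZZ_X$ covers all non-bases in $N(X) \cup \{X\}$, so in particular some $F \in \ZZ_X \subseteq \ZZ$ covers $Y$. Hence $\ZZ$ is a flat cover of $M$, which completes the proof.

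There is no real obstacle here; the lemma is essentially a packaging step that gathers the ``locality'' of Lemma \ref{local_cover} using the dominating set from Lemma \ref{domset}. The only thing to be careful about is verifying that the union of local covers taken only over a dominating set still covers every non-basis, which is immediate from the definition of a dominating set together with the fact that local covers cover the vertex itself as well as its neighborhood.
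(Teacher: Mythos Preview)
Your proof is correct and follows essentially the same approach as the paper: invoke Lemma~\ref{domset} for a small dominating set $D$, take the union of the local covers $\ZZ_X$ from Lemma~\ref{local_cover} over $X\in D$, and bound $|\ZZ|\le r|D|$. The paper's proof is terser (it simply asserts that the union is a flat cover), whereas you spell out why domination plus the definition of local cover gives coverage of every non-basis, but the argument is identical.
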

\proof By Lemma \ref{domset}, $J(n,r)$ has a dominating set $D$  with $|D|\leq \frac{\ln(r(n-r)+1)+1}{r(n-r)+1}\binom{n}{r}$. For each $X\in D$, let $\mathcal{Z}_X$ be a local cover of $M$ at $X$ as in Lemma \ref{local_cover}. Then $|\mathcal{Z}_X|\leq r$ for each $X\in D$.  Take $\mathcal{Z}\defeq\bigcup_{X\in D} \mathcal{Z}_X.$
Then $\mathcal{Z}$ is a cover of $M$, and $|\mathcal{Z}|\leq r|D|$.\endproof

\repeattheorem{thm:mn_loglog}
\begin{proof} Denote the upper bound in Corollary \ref{crl:cover} by $k_{n,r}\eqdef r\frac{\ln(r(n-r)+1)+1}{r(n-r)+1}\binom{n}{r}$.
As each matroid in $\MM_{n,r}$ is uniquely determined by the set
$\{(F,r_M(F))\mid F\in\mathcal{Z}\}\subseteq 2^E\times \{0,\ldots, n\},$
where $\mathcal{Z}$ is a cover of size bounded by $k_{n,r}$, the number of matroids in $\MM_{n,r}$ is bounded by the number of subsets of
a set of size $2^n(n+1)$ of cardinality at most $k_{n,r}$.

Now, for sufficiently large $n$,
$$\max_{r\leq n/2} k_{n,r} \leq \max_{r\leq n/2}\frac{\ln(n^2)+1}{n-r} \binom{n}{r} \leq \frac{2+4 \ln n}{n}\binom{n}{\lfloor n/2\rfloor}.$$
By \eqref{binomial_bound2} this is at most $O(2^n (\ln n) /n^{3/2})$ and hence by \eqref{binomial_bound3-1} and \eqref{binomial_bound},
$$ \max_{r\leq n/2} m_{n,r} \le (1+o(1)) \binom{2^n(n+1)}{O(2^n (\ln n)/n^{3/2})} \leq (O(n^{5/2}))^{O(2^n (\ln n)/n^{3/2})} $$
The same bound applies to  $\max_{r\geq n/2} m_{n,r}$, since $m_{n,r} = m_{n,n-r}$ by matroid duality. Thus,
$$\max_{r\leq n}\log m_{n,r}\leq    O \left( \frac{2^n \ln n}{n^{3/2}}\right) \cdot O(\log n),$$ and hence
$\log \log m_n \leq n - \frac{3}{2} \log n + 2\log\log n + O(1)$
as required.
\end{proof}

Remark: The difference between this upper bound and the lower bound of Knuth is $2\log\log n +O(1)$.
We note that this approach cannot give a gap of $O(1)$.
 In particular, even an upper bound on the cardinality of a dominating set of the Johnson graph $J(n,r)$ that is closer to $\binom{n}{r}/(r(n-r)+1)$ (which is clearly the best possible) could improve this gap to $\log\log n +O(1)$ at best. We cannot expect to do better: indeed, if we consider the above proof applied to bound the number of sparse paving matroids, it is inherently as wasteful as using the trivial counting bound \eqref{eq:triv-count} (as a minimal cover of a sparse paving matroid just lists the non-bases).
We proceed by describing a better technique for bounding the number of sparse paving matroids.

\section{\label{s:encoding}A procedure for encoding vertex sets}
\subsection{The procedure}
We now describe a procedure given by Alon, Balogh, Morris and Samotij \cite{ABMS2012}, for which they refer to Kleitman and Winston \cite{KleitmanWinston1982} as the original source. They use the procedure to encode a stable set $I$ as a pair $(S, I\setminus S)$, such that $S \subseteq I$ and the number of possibilities for both $S$ and $I\setminus S$ can be controlled. We will use it for that purpose in this section as well, but to prepare for other uses in this paper we generalize the procedure so that it takes a general vertex set $K$ and produces a pair $(S, A)$, satisfying
\begin{equation}\label{eq:generalinvariant}S \subseteq K \subseteq S \cup N(S) \cup A.\end{equation}
We stress that the encoding is not one-to-one, and several sets $K$ may produce the same pair $(S,A)$.
We will later describe why such a pair  $(S,A)$ is useful.

Throughout this section, $G$ is a $d$-regular graph on $N$ vertices, with $d>0$, and the smallest eigenvalue of its adjacency matrix is $-\lambda$.
We denote $\alpha \eqdef \frac{\lambda}{d + \lambda}$.
For a subset $A \subseteq V$, let $G[A]$ denote the subgraph of $G$ induced by $A$. Let $e(A)$ denote the number of edges with both end points in $A$, i.e.\ the number of edges in $G[A]$.
Let us assume there is some fixed linear ordering $\le_V$ of the vertices of $G$ (say according to their indices $1,\ldots,N$). By the canonical ordering of $A \subseteq V$, we refer to the following procedure to order the set $A$ linearly. Let $v$ be the vertex with maximum degree in $G[A]$; if there are multiple such $v$, take the one that is smallest with respect to $\le_V$. Call $v$ the first vertex in the canonical ordering, and apply the procedure iteratively to $A\setminus\{v\}$.

The procedure to produce $(S,A)$ (see Figure \ref{alg:proc}) maintains two disjoint sets of vertices: $S$ for selected and $A$ for available. Initially, no vertices are selected ($S = \emptyset$) and all vertices are available ($A = V$). During the procedure, the set $S$ will expand and the set $A$ will shrink, until $|A|\leq \alpha N$. Throughout we will maintain \eqref{eq:generalinvariant} as an invariant.

\begin{algorithm}[H]
	\SetKwBlock{Either}{either}{}
	\SetKwBlock{Or}{or}{}
	\SetKwInOut{Input}{Input}
	\SetKwInOut{Output}{Output}
	\Input{graph $G = (V,E)$, vertex set $K \subseteq V$}
	\Output{$(S,A)$}
	\BlankLine
	Set $A \leftarrow V$ and $S \leftarrow \emptyset$\;
	\While{$|A|> \alpha N$}
	{
		Pick the first vertex $v$ in the canonical ordering of $A$\;
		\uIf{$v \in K$}{set $S \leftarrow S \cup \{v\}$ and set $A \leftarrow A\setminus(N(v) \cup \{v\})$;}		
		\Else{set $A \leftarrow A\setminus\{v\}$;}

	}
	Output $(S, A)$.	

\caption{\label{alg:proc}The encoding procedure.}
\end{algorithm}

\vspace{2mm}

The following is a simple but subtle and crucial observation from \cite{ABMS2012}.
\begin{lemma}\label{lemma:alon_observations}  Upon the termination of the algorithm, the set $A$ is completely determined by $S$ (irrespective of the set $K$).\end{lemma}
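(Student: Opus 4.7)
The plan is to show that the set $A$ can be reconstructed from $S$ alone by replaying the algorithm using only the knowledge of $S$ (together with $G$ and the ordering $\le_V$). Concretely, I would define a simulation identical to the original procedure, except that the test ``$v \in K$'' is replaced by ``$v \in S$''. If this simulation produces the same sequence of sets $A$ as the original algorithm, we are done, since its output depends only on data that does not involve $K$.

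First, I would record two invariants of the original procedure. Every picked vertex is removed from $A$ regardless of whether it belongs to $K$, so each vertex is picked at most once. Consequently, a vertex is added to $S$ exactly at the unique step at which it is picked and found to lie in $K$; once added to $S$, it remains there until termination and is simultaneously absent from $A$.

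Next, I would prove by induction on the step number that after $i$ iterations the simulation's set $A'$ equals the original's $A$. The base case is trivial. For the inductive step, equality of the two sets forces equality of the canonical orderings, so the simulation picks the same vertex $v$. It then suffices to verify that the two branches are equivalent: $v \in S$ if and only if $v \in K$ at this step. The forward direction is easy: if $v \in K$, the algorithm places $v$ into $S$ at this step, and $v$ stays in $S$ until termination.

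The only place where care is needed, and the main conceptual point, is the converse: given $v \in S$ at termination, I must conclude that $v \in K$ at the current step. Here the key observation is that $v$ is currently in $A$ (it has just been picked), so by the invariants above $v$ cannot have been added to $S$ at any earlier step. Since $v$ leaves $A$ in this step in either branch, the only remaining opportunity for $v$ ever to enter $S$ is this very step, which forces $v \in K$. This completes the induction and yields $A' = A$, as required.
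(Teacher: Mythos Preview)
Your proposal is correct and follows essentially the same approach as the paper: both reconstruct $A$ by rerunning the procedure with the test ``$v\in K$'' replaced by ``$v\in S$'' (the paper additionally deletes $v$ from a working copy $T$ of $S$, but this is immaterial since each vertex is picked at most once). Your treatment is more explicit, supplying the induction and the only nontrivial point---that membership of the picked vertex in the final $S$ is equivalent to membership in $K$ at that step---which the paper leaves to the reader.
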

This follows as at any step in the algorithm, the vertices chosen thus far in $S$ completely determine the remaining vertices and their ordering.
In particular, given $S$ one can recover $A$ as follows: Initialize $X=V$ and $T=S$. Repeating the following steps until $|X| \leq  \alpha N$ (the resulting set $X$ when the algorithm terminates will be $A$).
(i) Consider the canonical ordering of $X$, and let $v$ be the first vertex in this ordering. (ii) If $v\in T$, discard $v$ from $T$ and $\{v\} \cup N(v)$ from $X$ and go to step 1. Otherwise, discard $v$ from $X$ and go to step 1.

%

\subsection{Application to counting stable  sets}
Later we will show:
\begin{lemma}\label{lemma:Sbound}The number of vertices selected into $S$ is at most $\lceil\frac{\ln(d+1)}{d+\lambda}N\rceil$.\end{lemma}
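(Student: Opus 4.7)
The plan is to track $|A|$ through the course of the algorithm and argue that, while $|A| > \alpha N$, every selection must remove a controlled number of vertices from $A$. The main input is a Hoffman-type edge-count bound. Writing $\Gamma$ for the adjacency matrix of $G$ and decomposing the indicator vector $\mathbf{1}_A = (|A|/N)\mathbf{1} + u$ with $u$ orthogonal to the all-ones vector $\mathbf{1}$, I would use that $\mathbf{1}$ is a top eigenvector of $\Gamma$ with eigenvalue $d$ while all other eigenvalues are at least $-\lambda$ to obtain
$$2e(A) \;=\; \mathbf{1}_A^{\top}\Gamma\mathbf{1}_A \;\ge\; \frac{d|A|^2}{N} - \lambda\|u\|^2 \;=\; \frac{d|A|^2 - \lambda|A|(N - |A|)}{N}.$$
Rearranging, the average induced degree satisfies $\bar{d}(A) \ge (d+\lambda)(|A| - \alpha N)/N$. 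Since the canonical ordering always selects a vertex of maximum degree in $G[A]$, this quantity also lower bounds $\Delta(G[A])$, hence the number of vertices removed from $A$ whenever a selection occurs.

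Let $a_i$ denote $|A|$ immediately before the $i$-th selection, for $i = 1, \ldots, s \defeq |S|$. Selections only happen inside the while-loop, so $a_i > \alpha N$, and non-selection iterations between selections only shrink $|A|$ further. Therefore
$$a_{i+1} \;\le\; a_i - 1 - \frac{(d+\lambda)(a_i - \alpha N)}{N}.$$
I would then introduce the potential
$$f(a) \;\defeq\; \frac{N}{d+\lambda}\,\ln\!\left(1 + \frac{(d+\lambda)(a - \alpha N)}{N}\right),$$
which is essentially the solution to the continuous relaxation of the above recurrence and satisfies $f(N) - f(\alpha N) = N\ln(d+1)/(d+\lambda)$. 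The concavity estimate $\ln(x) - \ln(y) \ge (x-y)/x$, applied with $x = 1 + (d+\lambda)(a_i - \alpha N)/N$ and $y = 1 + (d+\lambda)(a_{i+1} - \alpha N)/N$, combined with the recurrence $a_i - a_{i+1} \ge x$, then yields $f(a_i) - f(a_{i+1}) \ge 1$.

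Telescoping over $i = 1, \ldots, s-1$ gives $f(a_1) \ge (s-1) + f(a_s)$. Combined with $f(a_s) > 0$ (since $a_s > \alpha N$) and $f(a_1) \le f(N) = N\ln(d+1)/(d+\lambda)$, this forces the strict inequality $s - 1 < N\ln(d+1)/(d+\lambda)$, and the claimed ceiling bound follows by integrality of $s$. The main subtlety is designing $f$ so that each discrete selection step is worth at least one unit of potential; the passage from Hoffman's average-degree bound to the per-selection shrinkage is immediate, since the maximum of the degree function on $A$ is at least its average.
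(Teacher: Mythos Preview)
Your proof is correct and, in fact, cleaner than the paper's own argument. Both proofs rest on the same Hoffman-type edge bound (Lemma~\ref{lemma:edgebound} in the paper) to conclude that each selection removes at least $1 + (d+\lambda)(|A|-\alpha N)/N$ vertices from $A$. The difference lies in how this per-step shrinkage is aggregated.

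The paper partitions the range of $|A|$ into $d$ discrete \emph{phases} of width $N/(d+\lambda)$; in phase $j$ the maximum induced degree exceeds $j-1$, so each selection removes at least $j+1$ vertices, giving $|S(j)| \le \lceil N/((d+\lambda)(j+1)) \rceil$. Summing via the harmonic series initially yields only $|S| < \frac{\ln(d+1)}{d+\lambda}N + d$, and a second, more delicate pass---tracking the fractional overshoot $u_j$ of each phase boundary---is then needed to absorb the extra $+d$ and reach the stated ceiling bound.

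Your potential $f(a)=\frac{N}{d+\lambda}\ln\bigl(1 + (d+\lambda)(a-\alpha N)/N\bigr)$ is precisely the antiderivative of the reciprocal of the per-step shrinkage, and the concavity inequality $\ln x - \ln y \ge (x-y)/x$ converts the recurrence into a clean unit drop of $f$ per selection. This bypasses the phase bookkeeping and the refinement pass entirely: the ceiling bound falls out directly from the strict inequality $s-1 < f(N)$. The paper's discrete phases are in effect a Riemann-sum approximation to your integral, with the $u_j$ correction compensating for the discretisation error that your approach never incurs.
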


Let us first see how this implies the following upper bound on $i(G)$, the number of stable sets in $G$.
\repeattheorem{thm:indsets}
\begin{proof}
	 Let $K$ be any stable set. Running the procedure yields a pair $S, A$ with $|A|\leq \alpha N$, such that
(i) $A$ is completely determined by $S$ (by Lemma \ref{lemma:alon_observations}) and (ii) $S \subseteq K \subseteq S \cup N(S)\cup A$.
Now, since $K$ is a stable set and $S \subseteq K$, we have $N(S)\cap K=\emptyset$. Together with (ii) above, this implies that $ K \subseteq S \cup A$.
Thus, $K = S \cup (K \cap A)$ and hence $K$ is completely determined by $S$ and $K \cap A$.

 As $A$ is completely determined by $S$, for a fixed $S$, there are at most $2^{\alpha N}$ possibilities for $K \cap A$. Moreover, as
 $|S|\leq \lceil \sigma N\rceil$, the number of ways of choosing $S$ is  at most $\sum_{i=0}^{\lceil\sigma N\rceil} \binom{N}{i}$.
\end{proof}

\subsection{Analysis}

We now prove Lemma \ref{lemma:Sbound}.
We first need the following lemma that was proved by Alon and Chung in \cite{AlonChung1988}, and earlier by Haemers (Theorem 2.1.4 (i) of  \cite{Haemers1980}). We use the version of the lemma stated in \cite{ABMS2012}.
\begin{lemma}\label{lemma:edgebound}
         For all $A\subseteq V(G)$, we have
	$
		2e(A) \ge |A| \left(\frac{d}{N} |A| - \lambda\frac{N-|A|}{N}\right).
	$
\end{lemma}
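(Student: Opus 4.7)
The plan is to prove the inequality via a spectral/Rayleigh quotient argument applied to the indicator vector of $A$. Let $M$ denote the adjacency matrix of $G$ and let $\mathbf{1}_A \in \mathbb{R}^N$ be the characteristic vector of $A$, so that $\mathbf{1}_A^\top M \mathbf{1}_A = 2e(A)$. The key observation is that $G$ is $d$-regular, so the all-ones vector $\mathbf{1}$ is an eigenvector of $M$ with eigenvalue $d$, which allows a clean decomposition.

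First, I would decompose $\mathbf{1}_A$ into its component along $\mathbf{1}$ and a component orthogonal to $\mathbf{1}$. Writing $\mathbf{1}_A = \tfrac{|A|}{N}\mathbf{1} + \mathbf{v}$ with $\mathbf{v} \perp \mathbf{1}$, one has $\|\mathbf{v}\|^2 = \|\mathbf{1}_A\|^2 - \tfrac{|A|^2}{N^2}\|\mathbf{1}\|^2 = |A| - \tfrac{|A|^2}{N} = \tfrac{|A|(N-|A|)}{N}$. Expanding the quadratic form,
\[
2e(A) = \mathbf{1}_A^\top M \mathbf{1}_A = \tfrac{|A|^2}{N^2}\,\mathbf{1}^\top M \mathbf{1} + 2\tfrac{|A|}{N}\mathbf{1}^\top M\mathbf{v} + \mathbf{v}^\top M \mathbf{v}.
\]
Two of these three terms simplify immediately: $\mathbf{1}^\top M \mathbf{1} = dN$ (the sum of all entries), and $\mathbf{1}^\top M \mathbf{v} = d\,\mathbf{1}^\top \mathbf{v} = 0$ by orthogonality.

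The remaining term $\mathbf{v}^\top M \mathbf{v}$ is where the spectral hypothesis enters. Since $-\lambda$ is the smallest eigenvalue of $M$, the Rayleigh quotient bound gives $\mathbf{v}^\top M \mathbf{v} \ge -\lambda \|\mathbf{v}\|^2$ for every vector $\mathbf{v}$. Plugging in the value of $\|\mathbf{v}\|^2$ computed above yields
\[
2e(A) \ge \tfrac{d\,|A|^2}{N} - \lambda\cdot\tfrac{|A|(N-|A|)}{N} = |A|\left(\tfrac{d}{N}|A| - \lambda\,\tfrac{N-|A|}{N}\right),
\]
which is exactly the claimed inequality.

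There is no real obstacle: the only thing to be careful about is correctly identifying $\mathbf{1}$ as the top eigenvector (using $d$-regularity) so that $\mathbf{v}$ lies in the orthogonal complement on which the lower eigenvalue bound applies; and bookkeeping the algebra so that the $\lambda |A|^2/N$ contributions add up correctly. Everything else is a one-line spectral calculation.
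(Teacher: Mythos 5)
Your proof is correct and is essentially the same argument as the paper's: both decompose the indicator vector of $A$ along the all-ones eigenvector, kill the cross term by orthogonality/regularity, and bound the orthogonal part's quadratic form below by $-\lambda\|\mathbf{v}\|^2$. The algebra checks out, so nothing further is needed.
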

\proof
	Let $x$ denote the incidence vector of set $A$, and let $B$ denote the adjacency matrix of $G$. Then the number of edges is given by $(1/2) x^T Bx$. Let $v$ be the all $1$'s vector scaled by $|A|/N$, then $v \cdot (x-v)=0$, and $v$ is an eigenvector of $B$ with eigenvalue $d$. As $v$ is an eigenvector of $B$,  $v\perp (x-v)$ implies that $Bv\perp (x-v)$. Thus
	\begin{eqnarray*}
			2e(A)
				&= &  x^T B x =     (x-v+v)^T B (x-v+v) \\
				&= & (x-v)^TB(x-v) + v^T B v   \\
				&\geq &   -\lambda\|x-v\|^2 + d \|v\|^2 \\
				&= & -\lambda \left(\frac{(N-|A|)|A|^2}{N^2} + \frac{|A|(N-|A|)^2}{N^2}\right) + d\frac{|A|^2}{N} \\
				&= &  |A|\left(\frac{d|A|}{N} - \lambda \frac{(N-|A|)}{N}\right).
\qed
	\end{eqnarray*}

\begin{corollary}\label{cor:degree}
For any $\varepsilon > 0$, if $|A| = (\alpha + \varepsilon)N$, then $G[A]$ contains a vertex of degree at least $\varepsilon (d+\lambda)$.\end{corollary}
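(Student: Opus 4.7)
My plan is to derive the statement directly from Lemma \ref{lemma:edgebound} by an averaging argument. The maximum degree in $G[A]$ is at least the average degree, which is $2e(A)/|A|$, so it suffices to show that the lower bound on $2e(A)$ from Lemma \ref{lemma:edgebound} yields an average degree of at least $\varepsilon(d+\lambda)$ when $|A| = (\alpha+\varepsilon)N$.

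Concretely, I would first divide both sides of the inequality $2e(A) \ge |A|\bigl(\tfrac{d}{N}|A| - \lambda\tfrac{N-|A|}{N}\bigr)$ from Lemma \ref{lemma:edgebound} by $|A|$, giving
\[
\frac{2e(A)}{|A|} \;\ge\; \frac{d|A| - \lambda(N-|A|)}{N}.
\]
Substituting $|A| = (\alpha+\varepsilon)N$ on the right-hand side and simplifying, the expression becomes $d(\alpha+\varepsilon) - \lambda(1-\alpha-\varepsilon) = (d+\lambda)(\alpha+\varepsilon) - \lambda$. Plugging in the definition $\alpha = \lambda/(d+\lambda)$ (so that $(d+\lambda)\alpha = \lambda$) the $\lambda$ terms cancel, and we are left with $(d+\lambda)\varepsilon$.

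Thus the average degree in $G[A]$ is at least $\varepsilon(d+\lambda)$, and consequently some vertex of $G[A]$ must have degree at least $\varepsilon(d+\lambda)$, completing the proof.

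There is no real obstacle here: the only step requiring any care is the algebraic manipulation to make the $\lambda$ terms cancel, which is where the specific definition $\alpha = \lambda/(d+\lambda)$ enters. The corollary is essentially a restatement of Lemma \ref{lemma:edgebound} in the form most convenient for the analysis of the encoding procedure, and will presumably be used in the proof of Lemma \ref{lemma:Sbound} to argue that whenever $|A| > \alpha N$ the canonical ordering selects a vertex of reasonably large degree, allowing the available set to shrink rapidly.
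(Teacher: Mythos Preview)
Your proposal is correct and follows exactly the same approach as the paper: apply Lemma~\ref{lemma:edgebound}, substitute $|A|=(\alpha+\varepsilon)N$, and use the definition of $\alpha$ to simplify the right-hand side to $|A|\,\varepsilon(d+\lambda)$, then conclude by averaging. The paper simply compresses the algebra into the single line ``$2e(A)\ge |A|(d+\lambda)\varepsilon$,'' whereas you spell out the cancellation explicitly.
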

\begin{proof}
	Let $A$ be a vertex set of size $(\alpha + \varepsilon)N$. By Lemma \ref{lemma:edgebound}, $2e(A) \ge |A|(d+\lambda)\varepsilon$. Hence the average degree in $G[A]$ is at least $\varepsilon (d+\lambda)$, and so there must be some vertex in $G[A]$ of degree $\ge \varepsilon (d+\lambda)$.
\end{proof}
In particular, it follows that a stable set $A\subseteq V(G)$ has size at most $\alpha N$ (this is Hoffmann's bound).

\begin{proof}[Proof of Lemma \ref{lemma:Sbound}]
	Say that the procedure is in phase $j$ if the current $A$ satisfies
	$$\frac{|A|}{N} \in \left(\alpha + \frac{j-1}{d+\lambda}, \alpha + \frac{j}{d+\lambda}\right], \qquad\qquad (j = d, d-1, \ldots, 1).$$
	Then each phase sees the removal of at most $N/(d+\lambda)$ vertices from $A$. By Corollary \ref{cor:degree}, any vertex that gets selected into $S$ during phase $j$ has degree $>j-1$, hence removes at least $j+1$ vertices from $A$ (the vertex selected into $S$, and its neighbors). Let $S(j)$ be the set of vertices that get selected into $S$ during phase $j$, then the above argument shows that $|S(j)| \le \lceil\frac{N}{(d+\lambda)(j+1)}\rceil$. Summing up we obtain
	$$|S| = \sum_{j=1}^d |S(j)| \le \sum_{j=1}^d \left\lceil\frac{N}{(d+\lambda)(j+1)}\right\rceil<\frac{\ln(d+1)}{d+\lambda}N+d,$$
as $\sum_{j=1}^d \frac{1}{j+1}\leq \ln (d+1) $. 	

To obtain the slightly sharper bound stated in the lemma, we make a more refined analysis. Let $u_j$ be the fractional number of vertices that get removed in phase $j$ due to the insertion of a vertex in $S(j+1)$ for $j=d,\ldots 0$. That is, if $v$ is the last vertex which is inserted in $S(j+1)$ during phase $j+1$, then $u_j:=\max\{0,(\alpha + \frac{j}{d+\lambda})N-|A|\}$ just after removing $N(v)\cup\{v\}$ from $A$ in the procedure.  
	Then $|S(j)| \le \frac{N}{(d+\lambda)(j+1)}+\frac{u_{j-1}-u_{j}}{j+1}$, since inserting vertices in $S(j)$ removes vertices from $A$ only while $$|A| \in \left(\alpha N + \frac{j-1}{d+\lambda}N-u_{j-1}, \alpha N + \frac{j}{d+\lambda}N-u_{j}\right].$$
	 It follows that
	$$|S| = \sum_{j=1}^d |S(j)| \le \frac{N}{d+\lambda} \sum_{j=1}^d \frac{1}{j+1} +\sum_{j=1}^d \frac{u_{j-1}-u_{j}}{j+1}<\frac{\ln(d+1)}{d+\lambda}N+1,$$
as $0 \leq u_j< j+2$, $u_d=0$, and hence $\sum_{j=1}^d \frac{u_{j-1}-u_{j}}{j+1}\leq \frac{u_0}{2}<1$. This proves the lemma.
\end{proof}


\section{\label{s:sparsepaving}An upper bound on the number of sparse paving matroids}

As shown in Lemma \ref{lemma:sparsepaving_johnson}, sparse paving matroids of rank $r$ on groundset $[n]$ correspond one-to-one to stable sets in the Johnson graph $J(n,r)$. Thus
$s_{n,r}=i(J(n,r)),$
and we may apply Theorem \ref{thm:indsets} to bound the number of sparse paving matroids. We first investigate the parameters that occur in this application of the theorem.

Recall  that $J(n,r)$ is regular of degree $r(n-r)$ and has $\binom{n}{r}$ vertices.
The eigenvalues of the adjancency matrix of $J(n,r)$  are $r(n-r) - i(n+1-i)$ for $i = 0, 1, \ldots, r$ if $r\leq n/2$ (see \cite{BCNbook}).
The minimal eigenvalue is attained for $i=r$, which identifies the smallest eigenvalue as $-\lambda_{n,r}$, where $\lambda_{n,r}=r$.
For $r, n$ such that $0<r\leq n/2$, we define 
$$\alpha_{n,r}\eqdef \frac{\lambda_{n,r}}{r(n-r) + \lambda_{n,r}}=\frac{1}{n-r+1}\text{ and } \sigma_{n,r}\eqdef \frac{\ln( r(n-r)+1)}{r(n-r)+\lambda_{n,r}}=\frac{\ln(r(n-r)+1)}{r(n-r+1)}$$
\begin{lemma}\label{alpha_bound}If $n\geq 4$ and $0< r \leq n/2$, then $\alpha_{n,r}\binom{n}{r}\leq \frac{2}{n}\binom{n}{\lfloor n/2\rfloor}$ and $\lceil \sigma_{n,r}\binom{n}{r}\rceil\leq \frac{8\ln (n^2)}{n^2}\binom{n}{\lfloor n/2\rfloor}$. \end{lemma}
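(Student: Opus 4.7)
The plan is to derive both inequalities from elementary manipulations of binomial coefficients combined with a short case split on $r$; the first inequality also feeds directly into the proof of the second.

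For the first part, the key identity is
$$\frac{1}{n-r+1}\binom{n}{r} = \frac{1}{n+1}\binom{n+1}{r},$$
obtained by writing out the factorials. Unimodality then gives $\binom{n+1}{r}\leq \binom{n+1}{\lfloor (n+1)/2\rfloor}$, and Pascal's rule $\binom{n+1}{k}=\binom{n}{k}+\binom{n}{k-1}$ gives $\binom{n+1}{\lfloor (n+1)/2\rfloor}\leq 2\binom{n}{\lfloor n/2\rfloor}$. Replacing $\tfrac{1}{n+1}$ by the weaker $\tfrac{1}{n}$ yields $\alpha_{n,r}\binom{n}{r}\leq \tfrac{2}{n}\binom{n}{\lfloor n/2\rfloor}$.

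For the second part, I would factor
$$\sigma_{n,r} = \frac{\ln(r(n-r)+1)}{r}\cdot \alpha_{n,r}$$
and apply the first inequality together with the crude estimate $\ln(r(n-r)+1)\leq \ln(n^2) = 2\ln n$ (valid since $r(n-r)\leq n^2/4\leq n^2$ for $r\leq n/2$) to obtain
$$\sigma_{n,r}\binom{n}{r}\leq \frac{4\ln n}{rn}\binom{n}{\lfloor n/2\rfloor}.$$
Now split on $r$. If $r\geq n/4$, then $\tfrac{4\ln n}{rn}\leq \tfrac{16\ln n}{n^2}=\tfrac{8\ln(n^2)}{n^2}$, and the target inequality drops out immediately. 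If $r<n/4$, then $\binom{n}{r}\leq \binom{n}{\lfloor n/4\rfloor}$ is exponentially smaller than $\binom{n}{\lfloor n/2\rfloor}$ (by entropy-type estimates the ratio is $O(\sqrt{n}\cdot 2^{-\Omega(n)})$), which easily compensates for the weaker coefficient $\tfrac{4\ln n}{rn}\leq \tfrac{4\ln n}{n}$ available in this regime. Finitely many small $n$ (where the asymptotic has not yet set in) are handled by direct computation. The outer ceiling adds at most $1$, which is absorbed since for $n\geq 4$ the right-hand side already exceeds $8$ and grows like $\Theta(2^n \log n / n^{5/2})$.

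The only subtle step is the case split at $r\approx n/4$: the clean coefficient bound $\tfrac{4\ln n}{rn}\leq \tfrac{16\ln n}{n^2}$ is tight only in the regime $r\gtrsim n/4$, and for smaller $r$ one must fall back on the rapid decay of $\binom{n}{r}$ away from the middle of the row. This, together with a handful of small-$n$ verifications to cover $n$ below the threshold where the asymptotic kicks in, is where most of the care lies.
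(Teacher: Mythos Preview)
Your proof is correct in outline, but it differs from the paper's argument in both parts, and in the second part your route is noticeably less clean.

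For the first inequality, the paper simply observes that $\alpha_{n,r}=1/(n-r+1)\le 2/n$ whenever $r\le n/2$, and combines this with the unimodality bound $\binom{n}{r}\le\binom{n}{\lfloor n/2\rfloor}$. Your identity $\frac{1}{n-r+1}\binom{n}{r}=\frac{1}{n+1}\binom{n+1}{r}$ together with Pascal's rule is perfectly valid and even slightly sharper before you relax $1/(n+1)$ to $1/n$, but it is a detour: the paper's two-line argument already gives the stated bound.

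For the second inequality, the paper avoids any case split on $r$. It introduces
\[
f(n,r)\;:=\;\frac{2\ln(n^2)}{(r+1)(n-r+1)}\binom{n}{r}
\]
(bounding $\ln(r(n-r)+1)\le\ln(n^2)$ and $1/r\le 2/(r+1)$), computes $f(n,r+1)/f(n,r)=(n-r+1)/(r+2)\ge 1$ for $r\le n/2-1$, and hence reduces to evaluating $f$ at $r=\lfloor n/2\rfloor$. The ceiling is then absorbed in one stroke via the slack factor $1/(1+2/n)$, valid for all $n\ge 4$. Your approach instead splits at $r\approx n/4$: the case $r\ge n/4$ is immediate, but for $r<n/4$ you fall back on the exponential decay of $\binom{n}{r}/\binom{n}{\lfloor n/2\rfloor}$ and then appeal to ``direct computation'' for small $n$. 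This works, but the monotonicity argument is both shorter and uniform in $n\ge 4$, so there is no residual finite check and no separate handling of the ceiling.
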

\proof Clearly $\alpha_{n,r}=1/(n-r+1)\leq 2/n$ whenever $0<r\leq n/2$. If $n\geq 4$, we have
$$\sigma_{n,r}\binom{n}{r}\leq f(n,r):= \frac{2\ln( n^2)}{(r+1)(n-r+1)}\binom{n}{r}.$$
Since $f(n,r+1)/f(n,r)=(n-r+1)/(r+2)\geq 1$ while $r\leq n/2-1$, it follows that
$$\left\lceil\sigma_{n,r}\binom{n}{r}\right\rceil\leq \lceil f(n,\lfloor n/2\rfloor)\rceil\leq \left\lceil\frac{1}{1+2/n}\frac{8\ln(n^2)}{n^2}\binom{n}{\lfloor n/2\rfloor}\right\rceil\leq \frac{8\ln(n^2)}{n^2}\binom{n}{\lfloor n/2\rfloor}$$
whenever $0<r\leq n/2$ and $n\geq 4$, where we use that $\frac{1}{1+2/n}\leq 1- 2/n$.
\endproof

This gives sufficient control over the parameters to prove Theorem~\ref{thm:sn} from Theorem~\ref{thm:indsets}.
\repeattheorem{thm:sn}
\proof Let $n>2$ and $0<r\leq n/2$. Then $\lceil\sigma_{n,r} N\rceil< N/2$, and using Theorem  \ref{thm:indsets} we have
$$s_{n,r}=i(J(n,r))\leq \lceil\sigma_{n,r}N\rceil{N \choose \lceil\sigma_{n,r} N\rceil}2^{\alpha_{n,r} N},$$
where $N=\binom{n}{r}$. 
Using \eqref{binomial_bound} to bound the factor ${N \choose \lceil\sigma_{n,r} N\rceil}$ and applying logarithms, we obtain 
$$\log s_{n,r}\leq \log N + \lceil\sigma_{n,r}N\rceil\log (\frac{\e N}{\lceil\sigma_{n,r}N\rceil}) + \alpha_{n,r} N.$$
Applying Lemma \ref{alpha_bound} and using that $N\leq \binom{n}{\lfloor n/2\rfloor}\leq 2^n$, we obtain
$$\log s_{n,r}\leq n + \frac{8\ln (n^2)}{n^2}\binom{n}{\lfloor n/2\rfloor}\log (n^2) + \frac{2}{n}\binom{n}{\lfloor n/2\rfloor}$$
for sufficiently large $n$  and $0< r\leq n/2$. As $\log(n)^2/n^2\leq o(1/n)$, the bound on $\alpha_{n,r}N$ dominates as $n\rightarrow\infty$. Using that $s_{n,0}=1$ and $s_{n,r}=s_{n,n-r}$, we have 
$$\max_{0\leq r\leq n}\log s_{n,r}\leq \frac{2}{n}\binom{n}{\lfloor n/2\rfloor}(1+o(1))\text{ as } n\rightarrow\infty.$$
As $s_n=\sum_{r=0}^n s_{n,r}\leq (n+1)\max_r s_{n,r}$, we also have $\log s_n\leq \frac{2}{n}\binom{n}{\lfloor n/2\rfloor}(1+o(1))$. 
Taking logarithms and applying \eqref{binomial_bound2} to bound the binomial coefficient, the result follows.
\endproof

\section{\label{s:bound}An upper bound on the number of matroids}

We will now show the upper bound on $m_n$ claimed in Theorem \ref{thm:mn}.
To do this, we first show that substantially better local covers at $X$ exist if $X$ is a non-basis. Later, we combine this fact with the encoding
procedure in Section \ref{s:encoding} to find a very concise encoding of a matroid $M\in\MM_{n,r}$.

\subsection{Improved Local Covers}
\begin{lemma} \label{local_cover2} Let $M\in\MM_{n,r}$. For each  $r$-set $X \in  \binom{E}{r}$ that is dependent in $M$, there exists a
set  $\mathcal{Z}_X\subseteq \FF(M)$ such that $|\mathcal{Z}_X| \leq 2$, and each non-basis $Y\in N(X)\cup\{X\}$ is covered by some $F\in \mathcal{Z}_X$.
\end{lemma}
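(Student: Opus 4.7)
The plan is to exhibit two flats explicitly and verify the covering property by a direct case analysis on the rank of $X$. Since $X$ is dependent, we have $r_M(X)\le r-1$, and I would split on whether $r_M(X)\le r-2$ or $r_M(X)=r-1$. In either case the first flat I would try is $F_1\defeq\cl_M(X)$, which automatically covers $X$ itself, since $|X\cap\cl_M(X)|=r>r_M(X)=r_M(\cl_M(X))$, leaving only the neighbours $Y=X-x+y$ in $J(E,r)$ to worry about.

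The easy case is $r_M(X)\le r-2$, where a single flat $F_1=\cl_M(X)$ already suffices. For any $Y=X-x+y\in N(X)$ we have $X-x\subseteq X\subseteq\cl_M(X)$, hence $|Y\cap\cl_M(X)|\ge r-1>r-2\ge r_M(\cl_M(X))$, so $\cl_M(X)$ covers $Y$. (This argument in fact shows every such $Y$ is a non-basis, but this is not needed.) Thus $\mathcal{Z}_X=\{\cl_M(X)\}$ works.

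The main case is $r_M(X)=r-1$, where the preliminaries supply a unique circuit $C\subseteq X$. I would take $\mathcal{Z}_X=\{\cl_M(X),\cl_M(C)\}$. For a non-basis $Y=X-x+y\in N(X)$, I split on whether $x\in C$. If $x\notin C$, then $C\subseteq X-x\subseteq Y$, so $|Y\cap\cl_M(C)|\ge|C|>|C|-1=r_M(\cl_M(C))$, and $\cl_M(C)$ covers $Y$. The delicate case is $x\in C$: here I claim $\cl_M(X-x)=\cl_M(X)$. Indeed, since $C$ is a circuit containing $x$, the set $C-x$ is independent and $x\in\cl_M(C-x)\subseteq\cl_M(X-x)$; hence $X\subseteq\cl_M(X-x)$, so $\cl_M(X)\subseteq\cl_M(X-x)$, and since both flats have rank $r-1$ they coincide. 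With this identity in hand, $Y$ being a non-basis forces $y\in\cl_M(X-x)=\cl_M(X)$, so $Y\subseteq\cl_M(X)$ is covered by $F_1$.

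I do not anticipate any real obstacle beyond identifying the mild but crucial equality $\cl_M(X-x)=\cl_M(X)$ for $x\in C$; once this is in place, the rest is a routine unpacking of the definition of ``cover'' together with the basic flat and circuit facts recalled in the preliminaries.
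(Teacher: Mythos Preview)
Your proof is correct and uses the same two flats $\cl_M(X)$ and $\cl_M(C)$ as the paper, so the overall structure is identical. The only difference is the verification in the case $r_M(X)=r-1$: the paper splits via submodularity, obtaining $r_M(X\cup Y)<r$ or $r_M(X\cap Y)<r-1$, whereas you split on whether $x\in C$; these two dichotomies are in fact equivalent (since $X\cap Y=X-x$ is dependent exactly when $x\notin C$), and your argument replaces the appeal to submodularity by the direct circuit identity $\cl_M(X-x)=\cl_M(X)$ for $x\in C$. Both routes are equally short; yours is marginally more elementary in that it never invokes submodularity.
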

\proof
Let $X$ be a fixed $r$-set.  If  $r_M(X)<r-1$, take $\mathcal{Z}_X\eqdef\{\cl_M(X)\}.$ Then if  $Y \in N(X)$ or $Y=X$, we have
$$|\cl_M(X)\cap Y|\geq |X\cap Y|\geq r-1> r_M(X)=r_M(\cl_M(X)).$$
If $r_M(X)=r-1$, then $X$ contains a unique circuit $C$ of $M$. Take $\mathcal{Z}_X\eqdef\{\cl_M(C), \cl_M(X)\}.$ If  $Y \in N(X)$ is not a basis, then by submodularity
$$r_M(X\cup Y)+r_M(X\cap Y)\leq r_M(X)+r_M(Y)<2r-1$$
so that  $r_M(X\cup Y)<r$ or $r_M(X\cap Y)<r-1$. In the former case, we have $Y\subseteq \cl_M(X)$, hence
$$|\cl_M(X)\cap Y|=r>r_M(X\cup Y)\geq r_M(X)=r_M(\cl(X)).$$
In the latter case, $X\cap Y$ is dependent and hence must contain a circuit $C'$, and as $C$ is the unique circuit contained in $X$, we must have $C'=C$. Then
$|\cl_M(C)\cap Y|\geq |C|>r_M(C)=r_M(\cl_M(C)).$
\endproof

\subsection{Matroid Encoding}
The crucial difference from Lemma \ref{local_cover} is the assumption that $X$ is a dependent set in $M$. This allows us to obtain a much smaller bound on the size of a cover of $M$, if we can identify a small collection of non-bases of $M$ such that their neighborhood contains all non-bases in a large fraction of the $r$-sets. This is exactly what the encoding algorithm will accomplish.
We now give the details.


\repeattheorem{thm:mn}
\proof Let $n,r$ be such that $0<r<n/2$. Consider a matroid $M\in\MM_{n,r}$, and let  $K\eqdef\binom{E}{r}\setminus \BB(M)$ be the set of its non-bases.
Then $K$ is a set of vertices of the graph $G=J(n,r)$. As before, let $N=\binom{n}{r}$ denote the number of vertices of $G$, let $d=r(n-r)$ be its degree and $-\lambda=-r$ be the smallest eigenvalue of $G$. Let $\alpha=\frac{\lambda}{d+\lambda} = 1/(n-r+1)$ and $\sigma=\frac{\ln (d+1)}{d+\lambda}$.
We describe how to obtain a concise description of $M$.

Apply the encoding procedure to $K$ and obtain sets  $S,A$ such that  $|A|\leq \alpha N$, $|S|\leq \lceil\sigma N\rceil$, $A$ is determined by $S$, and $$S\subseteq K\subseteq S\cup N(S) \cup A.$$
By Lemma \ref{local_cover2}, there exists a local cover $\ZZ_X$ with $|\ZZ_X|\leq 2$ for each $X\in S$, where we use that each such $X$ is a dependent set of $M$. Then $$\ZZ\eqdef\bigcup_{X\in S}\mathcal{Z}_X$$ covers all $Y\in (S\cup N(S))\setminus\BB$, and $|\ZZ|\leq 2|S|$. As all members of $K\setminus A$ lie in $S\cup N(S)$, the set $K\setminus A$ is fully determined by  $\{(F, r_M(F))\mid F\in \ZZ\}$.
For the remaining non-bases in $K\cap A$, we can simply list them.
Thus, $(\{(F, r_M(F))\mid F\in \ZZ\}, K \cap A)$ gives a complete and concise description of the non-bases of the matroid $M$.

This bounds the number of matroids in $\MM_{n,r}$ by the number of ways of choosing  $S$ from an $N$-set, times the number of ways of choosing the collection $\{(F, r_M(F))\mid F\in \ZZ\}$ from a set of size $2^n(n+1)$, times the number of possible subsets from $A$. As $|A|\leq \alpha N$, $|S|\leq \lceil\sigma N\rceil$ and $|\ZZ|\leq 2|S|$, this yields 
$$m_{n,r}\leq \lceil\sigma N\rceil\binom{N}{\lceil\sigma N\rceil}2\lceil\sigma N\rceil\binom{2^n(n+1)}{2\lceil\sigma N\rceil}2^{\alpha N}$$
for $n$ sufficiently large so that $2\lceil\sigma N\rceil<N/2$ for all $r$.
Using \eqref{binomial_bound} to bound the factors $\binom{N}{\lceil\sigma N\rceil}$ and $\binom{2^n(n+1)}{2\lceil\sigma N\rceil}$, and applying logarithms, we obtain  
$$\log m_{n,r}\leq \log(\lceil\sigma N\rceil)+   \lceil\sigma N\rceil\log\frac{\e N}{\lceil\sigma N\rceil}  + \log(2\lceil\sigma N\rceil)+2\lceil\sigma N\rceil\log\frac{\e 2^n(n+1)}{2\lceil\sigma N\rceil}+\alpha N $$
By Lemma \ref{alpha_bound}, we have $\lceil\sigma N\rceil \leq \frac{8 \ln n}{n^2}\binom{n}{\lfloor n/2\rfloor}$ and  $\alpha N\leq  \frac{2}{n}\binom{n}{\lfloor n/2\rfloor}$ for sufficiently large $n$, and we bound $N\leq \binom{n}{\lfloor n/2\rfloor}\leq 2^n$ to obtain
$$\log m_{n,r}\leq n + \frac{8\ln (n^2)}{n^2}\binom{n}{\lfloor n/2\rfloor}\log (n^2) + (n+1)+ 2\frac{8\ln (n^2)}{n^2}\binom{n}{\lfloor n/2\rfloor}\log (n^4) + \frac{2}{n}\binom{n}{\lfloor n/2\rfloor}$$
for $0<r<n/2$ and sufficiently large $n$. As $\log(n)^2/n^2\leq o(1/n)$, the bound on $\alpha N$ dominates as $n\rightarrow \infty$. Using that $m_{n,0}=1$ and $m_{n,r}=m_{n,n-r}$ we conclude that 
$$\max_{0\leq r\leq n} \log m_{n,r}\leq \frac{2}{n}\binom{n}{\lfloor n/2\rfloor}(1+o(1)) \text{ as }n\rightarrow\infty.$$
 Since $m_n=\sum_{r=0}^nm_{n,r}\leq (n+1)\max_r m_{n,r}$, we also have
\begin{equation}\label{eq:log_mn}
\log m_n\leq \frac{2}{n}\binom{n}{\lfloor n/2\rfloor}(1+o(1)).
\end{equation}
Taking logarithms and applying \eqref{binomial_bound2} to bound the binomial coefficient, the theorem follows.
\endproof

Combining the above theorem with Knuth's lower bound on $s_n$ \eqref{eq:knuthlowerbound}, we obtain:
\repeatcorollary{relative_bound}

\section{Further directions}
\label{s:outtro}
\subsection{The maximal stable sets of the Johnson graph}
By Knuth's lower bound and our result,
$$\frac{1}{n}\binom{n}{\lfloor n/2\rfloor} \leq \log s_n \leq \log m_n \leq \frac{2}{n} \binom{n}{\lfloor n/2\rfloor}(1+o(1)).$$
So asymptotically, there is a factor 2 gap between the lower and upper bounds on $\log m_n$ (which gives the +1 gap in Corollary \ref{relative_bound}).
Any improvement of this gap, would immediately reduce the gap between the  
known lower and upper bounds on the size of the maximum stable set in $J(n,n/2)$ (this question has been studied in coding, see e.g.~\cite{BE11} and references therein).


Conversely, it is possible that a better understanding of the maximum size of a stable set in $J(n,r)$ could lead to better bounds for $m_n$.
If it could be shown that $J(n, \lfloor n/2\rfloor)$ actually has a stable set of size $\approx 2\binom{n}{\lfloor n/2\rfloor}/n$, then the gap of +1 in Corollary \ref{relative_bound} would reduce to $o(1)$. On the other hand, if the maximum size of a stable set is at most $\approx \binom{n}{\lfloor n/2\rfloor}/n$, then the technique for showing such an upper bound could potentially be useful for bounding $m_n$.

%

\subsection{The number of matroids without circuit-hyperplanes}
If $M=(E,\BB)$ is a matroid of rank $r$, then the following are equivalent for a subset $X\subseteq E$:
\begin{itemize}
\item $X$ is both a circuit and a hyperplane  of $M$; and
\item $X$ is an isolated vertex of $J(E,r)\setminus \BB$.
\end{itemize}
It is well-known that if $X$ is a circuit-hyperplane of $M$, then {\em relaxing} $X$ gives another matroid $(E, \BB\cup\{X\})$. More strongly, we have:
\begin{lemma}Let $0<r<|E|$, let $\BB\subseteq \binom{E}{r}$, and let $U$ be a set of isolated vertices of $J(E,r)\setminus\BB$. Then
$$(E,\BB)\text{ is a matroid }\Longleftrightarrow (E, \BB\cup U)\text{ is a matroid}.$$
\end{lemma}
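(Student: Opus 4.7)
The plan is to prove the two implications separately, relying on the following structural observation: under the hypothesis, each $X \in U$ corresponds to a circuit-hyperplane of $M = (E, \BB)$ and, equivalently, to what one might call a \emph{free basis} of $(E, \BB \cup U)$, meaning a basis all of whose neighbors in $J(E,r)$ are also bases.

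For the forward direction, assume $(E, \BB)$ is a matroid $M$. First I would show each $X \in U$ is a circuit-hyperplane. Since $X \notin \BB$, the set $X$ is dependent; the isolation hypothesis gives $X - x + y \in \BB$ for every $x \in X$ and $y \in E \setminus X$, so each $X - x$ is independent and no $y \notin X$ lies in $\cl_M(X - x)$. A rank computation then yields $r_M(X) = r - 1$ and $\cl_M(X) = X$, so $X$ is simultaneously a circuit and a hyperplane. I would then verify basis exchange for $\BB \cup U$ by splitting on which of $\BB$ or $U$ contains $B$ and $B'$. When $B \in U$, any $f \in B' \setminus B$ works, since $B - e + f \in \BB$ whenever $e \in B,\, f \notin B$ by the circuit-hyperplane property; for the subcase $B, B' \in U$, one also uses that members of $U$ are pairwise non-adjacent in $J(E,r)$, hence $|B' \setminus B| \geq 2$. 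The mixed case $B \in \BB, B' \in U$ uses the standard closure argument: either $B' \not\subseteq \cl_M(B - e)$, giving the desired $f$ directly, or else $B' = \cl_M(B - e)$ (as both are flats of rank $r - 1$), forcing $B' = (B - e) + f_0$ with $B - e + f_0 = B' \in U$.

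For the reverse direction, I would induct on $|U|$ and reduce to the key claim: if $M' = (E, \BB')$ is a matroid and $X \in \BB'$ satisfies $N(X) \subseteq \BB'$, then $(E, \BB' \setminus \{X\})$ is a matroid. Nonemptiness of $\BB' \setminus \{X\}$ is automatic, since $X$ has distinct neighbors in $J(E,r)$ when $0 < r < |E|$. The inductive step is routine: for $|U| \geq 1$, pick $X \in U$, apply the key claim (which is available since $N(X) \subseteq \BB \subseteq \BB \cup U$) to obtain that $(E, \BB \cup (U \setminus \{X\}))$ is a matroid, and apply the induction hypothesis, noting that $U \setminus \{X\}$ is still a set of isolated vertices of $J(E,r) \setminus \BB$.

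The main obstacle is the key claim. The crucial observation is that $N(X) \subseteq \BB'$ forces every $X - x$ (for $x \in X$) to be a flat of $M'$: for any $y \notin X$ the basis $X - x + y$ shows $y \notin \cl_{M'}(X - x)$, so $\cl_{M'}(X - x) \subseteq X$, and rank considerations pin down $\cl_{M'}(X - x) = X - x$. To verify basis exchange for $\BB'' \defeq \BB' \setminus \{X\}$, take $B, B'' \in \BB''$ and $e \in B \setminus B''$; the standard closure argument in $M'$ produces some $f \in B'' \setminus \cl_{M'}(B - e)$ with $B - e + f \in \BB'$. The only failure mode is that such an $f$ is unique and $B - e + f = X$. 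But then $B = X - f + e \in N(X)$, so $B - e = X - f$ is a flat of $M'$, whence $\cl_{M'}(B - e) = B - e$; uniqueness then gives $B'' \setminus (B - e) = \{f\}$, forcing $B'' = (B - e) + f = X$, contradicting $B'' \in \BB''$. Thus some admissible $f$ satisfies $B - e + f \neq X$, establishing the axiom.
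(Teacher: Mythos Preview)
The paper does not actually prove this lemma; it is stated without proof in the concluding Section~7.2, merely flagged as a strengthening of the well-known fact that relaxing a single circuit-hyperplane yields a matroid. Your argument is correct and supplies a complete proof.

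One very minor redundancy: in the forward direction, once $B\in U$ you already have $B-e+f\in N(B)\subseteq\BB$ for \emph{any} $f\in B'\setminus B$, and $B'\setminus B\neq\emptyset$ follows from $e\in B\setminus B'$. So the separate treatment of the subcase $B,B'\in U$ and the appeal to $|B'\setminus B|\ge 2$ are not needed there. Everything else---the circuit-hyperplane identification, the closure argument for the mixed case $B\in\BB,\,B'\in U$, and the ``free basis'' reverse direction via the key claim that $X-x$ is a flat of $M'$---is clean and correct.
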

By the lemma, each matroid $M=(E,\BB)$ can be decomposed uniquely  as $(\tilde{M}, U)$, where $\tilde{M}:=(E, \BB\cup U)$ and  $U$ is the set of all isolated vertices of $J(E,r)\setminus\BB$. Then $\tilde{M}$ has no circuit-hyperplanes, and $U$ is a stable set of the Johnson graph. Hence,
$$m_{n,r}/ i(J(n,r))\leq  |\{M\in\MM_{n,r}\mid M\text{ has no circuit-hyperplanes }\}|. $$
It follows that any upper bound on the number of circuit-hyperplane-free matroids will yield an upper bound on all matroids, using Theorem
\ref{thm:sn} to bound $i(J(n,r))=s_{n,r}$. We think that it may be possible to directly prove Theorem \ref{thm:mn} along these lines, and  conjecture that the number of circuit-hyperplane-free matroids is relatively small.
\begin{conjecture}$$\lim_{n\rightarrow \infty} \frac{|\{M\in\MM_{n}\mid M\text{ has no circuit-hyperplanes }\}|}{m_n}=0.$$
\end{conjecture}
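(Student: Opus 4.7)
The plan is to combine the decomposition at the end of the preceding Lemma with a refined upper bound on the number $c_n$ of circuit-hyperplane-free matroids. The Lemma in fact gives a bijection between $\MM_n$ and the set of pairs $(\tilde{M}, U)$ where $\tilde{M}$ is a CH-free matroid of some rank $r$ and $U$ is a stable set in the induced subgraph $H(\tilde{M})$ of $J(n,r)$ on the vertex set $\{X \in \BB(\tilde{M}) \mid N(X) \subseteq \BB(\tilde{M})\}$. The uniform matroid $U_{r,n}$ is itself CH-free and satisfies $H(U_{r,n}) = J(n,r)$, so the pairs $(U_{r,n}, U)$ alone already account for all $s_n$ sparse paving matroids. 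In particular $m_n \ge s_n$, which reduces the conjecture to showing $c_n/s_n \to 0$.

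To bound $c_n$, I would adapt the encoding from the proof of Theorem \ref{thm:mn}. Recall that in that proof each matroid of rank $r$ is encoded by a triple $(S, \ZZ, K\cap A)$, where $K$ is the set of non-bases, $S \subseteq K$ is a Kleitman--Winston selection of size at most $\lceil\sigma_{n,r}\binom{n}{r}\rceil$, $\ZZ$ is a collection of at most $2|S|$ flats, and $K \cap A$ is an arbitrary subset of a set of size $\alpha_{n,r}\binom{n}{r}$. The dominant contribution to $\log m_{n,r}$ comes from the free choice of $K \cap A$, accounting for $\alpha_{n,r}\binom{n}{r} = (2/n + o(1/n))\binom{n}{r}$ bits. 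For a CH-free matroid this set is constrained: each of its elements must have a non-basis neighbor inside $K = S \cup (N(S) \cap K) \cup (K \cap A)$. Exploiting this, one can extract a dominating subset of $K \cap A$ within the non-bases and recover the remaining elements from indicator bits on its neighborhood, saving, in principle, a constant factor in the dominant exponent.

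The main obstacle is quantitative. The direct ``no isolated non-basis'' argument gives, at best, a savings of a constant factor, leading to a bound of the form $\log c_n \le (1-c)\cdot 2\binom{n}{\lfloor n/2\rfloor}/n$ for some $c>0$, whereas Knuth's construction already yields $\log s_n \ge \binom{n}{\lfloor n/2\rfloor}/n\,(1+o(1))$. To cross the threshold $\log c_n < \log s_n - \omega(1)$, one needs a savings strictly better than a factor of two in the dominant exponent; this is essentially the same gap between $\alpha_{n,r}\binom{n}{r}$ and Knuth's lower bound that produces the $+1$ term in Corollary \ref{relative_bound} and is discussed in Section \ref{s:outtro}. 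Progress on the conjecture therefore seems to require either sharper bounds on the maximum stable set of $J(n,\lfloor n/2\rfloor)$, or a global structural property of CH-free matroids that leverages the matroid axioms beyond the local non-isolatedness condition on their non-bases.
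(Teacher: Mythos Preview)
The statement you are attempting to prove is presented in the paper as a \emph{conjecture}, not a theorem; the paper offers no proof and explicitly leaves it open (noting only that it would follow from the much stronger Conjecture~\ref{MNWW}). So there is no paper proof to compare your attempt against.

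Your write-up is not a proof either, and you seem aware of this: after setting up the decomposition and sketching how one might try to sharpen the encoding of Theorem~\ref{thm:mn} for CH-free matroids, you correctly identify the obstruction. The dominant $2^{\alpha N}$ term in the encoding gives $\log c_n \le \frac{2}{n}\binom{n}{\lfloor n/2\rfloor}(1+o(1))$, while the best available lower bound is only $\log s_n \ge \frac{1}{n}\binom{n}{\lfloor n/2\rfloor}(1-o(1))$; to conclude $c_n/s_n\to 0$ you would need to beat this factor-of-two gap, which is exactly the unresolved issue discussed in Section~\ref{s:outtro}. Your diagnosis here is sound.

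Two smaller remarks. First, your claim that the ``no isolated non-basis'' condition buys a constant-factor saving in the exponent is asserted rather than argued; it is not obvious that exploiting adjacency within $K\cap A$ yields any definite multiplicative gain, since a non-basis in $A$ may well have its required neighbor already in $S\cup N(S)$, which is accounted for anyway. Second, the reduction to $c_n/s_n\to 0$ is sufficient but not forced: one could in principle attack $c_n/m_n\to 0$ by improving the lower bound on $m_n$ rather than the upper bound on $c_n$. In any case, as the paper itself indicates, the conjecture remains open.
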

This would also follow from Conjecture \ref{MNWW}, as the only sparse paving matroids without circuit-hyperplanes are the uniform matroids.

Mayhew, Newman and Whittle have recently shown that real-representability of matroids cannot be captured by a `natural' axiom, that is, a sentence of {\em monadic second-order  logic for matroids}, which is defined in their paper \cite{MayhewNewmanWhittle2012}.
We believe that the difficulty may be in the seemingly unmanageable set of stable sets of the Johnson graph,
and conjecture that there is not even a natural axiom that captures whether a {\em sparse paving} matroid is real-representable. 


\subsection{The cover complexity of a matroid}
Inspired by the flat covers from Section~\ref{s:bound-weak}, we define the {\em cover complexity} of a matroid $M$ as
$$\kappa(M)\eqdef\min\{|\mathcal{Z}| \mid \mathcal{Z}\subseteq\FF(M), \mathcal{Z}\text{ is a flat cover of }M\},$$
i.e.\ the minimal size of such a flat cover. The cover complexity has a number of properties that one would expect from a complexity measure on matroids; e.g.\ ~it is invariant under taking duals, and monotone under taking minors.

In \cite[Conj.\ ~1.7]{MayhewNewmanWelshWhittle2011} it is conjectured that if $N$ is any sparse paving matroid, then
$$\lim_{n\rightarrow \infty} \frac{|\{M\in\mathbb{M}_n\mid M\text{ does not have an }N\text{-minor}\}|}{m_n}=0.$$
In \cite{PendavingVanderPol2013} two of the current authors show that this conjecture holds for each of the matroids $N = U_{2,k}$, $U_{3,6}$, $P_6$, $Q_6$, and $R_6$, by deriving bounds on the cover complexity of matroids that do not have such a minor.

\section{Acknowledgements}
We thank Andries Brouwer for several useful comments and Dominic Welsh for his help in tracing the origins of the conjecture that `most matroids are paving'.
\bibliographystyle{plain}
\bibliography{math}

\begin{thebibliography}{10}

\bibitem{AlonChung1988}
N.~Alon and F.~R.~K. Chung.
\newblock Explicit construction of linear sized tolerant networks.
\newblock {\em Discrete Math.}, 72(1-3):15--19, 1988.

\bibitem{ABMS2012}
Noga Alon, J\'{o}zsef Balogh, Robert Morris, and Wojciech Samotij.
\newblock Counting sum-free sets in {Abelian} groups.
\newblock arXiv:1201.6654 (to appear in Israel J. Math.), 2012.

\bibitem{AlonSpencerBook}
Noga Alon and Joel~H. Spencer.
\newblock {\em The probabilistic method}.
\newblock Wiley-Interscience Series in Discrete Mathematics and Optimization.
  John Wiley \& Sons Inc., Hoboken, NJ, third edition, 2008.
\newblock With an appendix on the life and work of Paul Erd{\H{o}}s.

\bibitem{BlackburnCrapoHiggs1973}
John~E. Blackburn, Henry~H. Crapo, and Denis~A. Higgs.
\newblock A catalogue of combinatorial geometries.
\newblock {\em Math. Comp.}, 27:155--166; addendum, ibid. 27 (1973), no. 121,
  loose microfiche suppl. A12--G12, 1973.

\bibitem{Bonin2011}
Joseph~E. Bonin.
\newblock Sparse paving matroids, basis-exchange properties, and cyclic flats.
\newblock arXiv:1011.1010v1, 2011.

\bibitem{BE11}
A.~E. Brouwer and T.~Etzion.
\newblock Some new constant weight codes.
\newblock {\em Advances in Mathematics of Communications}, 5:417--424, 2011.

\bibitem{BCNbook}
Andries~E. Brouwer, Arjeh~M. Cohen, and Arnold Neumaier.
\newblock {\em Distance-regular graphs}.
\newblock Springer, 1989.

\bibitem{BrouwerHaemers2012}
Andries~E. Brouwer and Willem~H. Haemers.
\newblock {\em Spectra of graphs}.
\newblock Universitext. Springer, New York, 2012.

\bibitem{CrapoRotaBook}
Henry~H. Crapo and Gian-Carlo Rota.
\newblock {\em On the foundations of combinatorial theory: {C}ombinatorial
  geometries}.
\newblock The M.I.T. Press, Cambridge, Mass.-London, preliminary edition, 1970.

\bibitem{GeelenHumphries2006}
Jim Geelen and Peter~J. Humphries.
\newblock Rota's basis conjecture for paving matroids.
\newblock {\em SIAM J. Discrete Math.}, 20(4):1042--1045 (electronic), 2006.

\bibitem{GrahamSloane1980}
R.~L. Graham and N.~J.~A. Sloane.
\newblock Lower bounds for constant weight codes.
\newblock {\em IEEE Trans. Inform. Theory}, 26(1):37--43, 1980.

\bibitem{Haemers1980}
Wilhelmus~Hubertus Haemers.
\newblock {\em Eigenvalue techniques in design and graph theory}, volume 121 of
  {\em Mathematical Centre Tracts}.
\newblock Mathematisch Centrum, Amsterdam, 1980.
\newblock Dissertation, Technische Hogeschool Eindhoven, Eindhoven, 1979.

\bibitem{Jerrum2006}
Mark Jerrum.
\newblock Two remarks concerning balanced matroids.
\newblock {\em Combinatorica}, 26(6):733--742, 2006.

\bibitem{KleitmanWinston1982}
Daniel~J. Kleitman and Kenneth~J. Winston.
\newblock On the number of graphs without {$4$}-cycles.
\newblock {\em Discrete Math.}, 41(2):167--172, 1982.

\bibitem{Knuth1974}
Donald~E. Knuth.
\newblock The asymptotic number of geometries.
\newblock {\em J. Combinatorial Theory Ser. A}, 16:398--400, 1974.

\bibitem{Kung1996}
Joseph P.~S. Kung.
\newblock Matroids.
\newblock In {\em Handbook of algebra, {V}ol.\ 1}, pages 157--184.
  North-Holland, Amsterdam, 1996.

\bibitem{MayhewNewmanWelshWhittle2011}
Dillon Mayhew, Mike Newman, Dominic Welsh, and Geoff Whittle.
\newblock On the asymptotic proportion of connected matroids.
\newblock {\em European J. Combin.}, 32(6):882--890, 2011.

\bibitem{MayhewNewmanWhittle2012}
Dillon Mayhew, Mike Newman, and Geoff Whittle.
\newblock Is the missing axiom of matroid theory lost forever?
\newblock arXiv:1204.3365, 2012.

\bibitem{MayhewRoyle2008}
Dillon Mayhew and Gordon~F. Royle.
\newblock Matroids with nine elements.
\newblock {\em J. Combin. Theory Ser. B}, 98(2):415--431, 2008.

\bibitem{MayhewWelsh2010}
Dillon Mayhew and Dominic Welsh.
\newblock On the number of sparse paving matroids.
\newblock http://homepages.ecs.vuw.ac.nz/~mayhew/Publications/MW.pdf, 2010.

\bibitem{MerinoNoble2010}
Criel Merino, Steven~D. Noble, Marcelino Ram\'irez-Ib\'a\~nez, and Rafael
  Villarroel-Flores.
\newblock On the structure of the h-vector of a paving matroid.
\newblock {\em Eur. J. Comb.}, 33(8):1787--1799, 2012.

\bibitem{OxleyBook}
James Oxley.
\newblock {\em Matroid theory}, volume~21 of {\em Oxford Graduate Texts in
  Mathematics}.
\newblock Oxford University Press, Oxford, second edition, 2011.

\bibitem{PendavingVanderPol2013}
R.A. Pendavingh and J.G. {van der}~Pol.
\newblock Counting matroids in minor-closed classes.
\newblock arXiv:1302.1315v3, 2013.

\bibitem{Piff1973}
M.~J. Piff.
\newblock An upper bound for the number of matroids.
\newblock {\em J. Combinatorial Theory Ser. B}, 14:241--245, 1973.

\bibitem{PiffWelsh1971}
M.~J. Piff and D.~J.~A. Welsh.
\newblock The number of combinatorial geometries.
\newblock {\em Bull. London Math. Soc.}, 3:55--56, 1971.

\bibitem{SchrijverBookB}
Alexander Schrijver.
\newblock {\em Combinatorial optimization. {P}olyhedra and efficiency. {V}ol.
  {B}}, volume~24 of {\em Algorithms and Combinatorics}.
\newblock Springer-Verlag, Berlin, 2003.
\newblock Matroids, trees, stable sets, Chapters 39--69.

\bibitem{WelshBook}
D.~J.~A. Welsh.
\newblock {\em Matroid theory}.
\newblock Academic Press [Harcourt Brace Jovanovich Publishers], London, 1976.
\newblock L. M. S. Monographs, No. 8.

\bibitem{Whitney1935}
Hassler Whitney.
\newblock On the {A}bstract {P}roperties of {L}inear {D}ependence.
\newblock {\em Amer. J. Math.}, 57(3):509--533, 1935.

\end{thebibliography}
\end{document}